\newcommand{\R}{\mathbb{R}}
\newcommand{\norm}[1]{\left\|#1\right\|}
\newcommand{\abs}[1]{\left|#1\right|}
\newcommand{\n}{\nabla}
\newcommand{\AVR}{\operatorname{AVR}(g)}
\newcommand{\gz}{g_{0}}
\newcommand{\ld}{\lambda}
\newtheorem{prop}{Proposition}[section]
\newtheorem{thm}[prop]{Theorem}
\newtheorem{lem}[prop]{Lemma}
\newtheorem{rem}[prop]{Remark}
\newtheorem{cor}[prop]{Corollary}
\newtheorem{defn}[prop]{Definition}
\numberwithin{equation}{section}
\begin{document}


\baselineskip=17pt


\title[Talenti Comparison]{Talenti's comparison theorem for Poisson equation and applications  on Riemannian manifold with nonnegative Ricci curvature}

\author[Daguang Chen,  Haizhong Li]{Daguang Chen, Haizhong Li}

\thanks{The  authors were supported by NSFC-FWO 11961131001 and  NSFC grant No. 11831005.}

\subjclass[2010]{{53C44}, {53C42}}
\keywords{Faber-Krahn, Isoperimetric inequality, Talenti's comparison, reverse H\"older inequality}

\maketitle


\begin{abstract}
In this article, we prove Talenti's comparison theorem for Poisson equation on complete noncompact Riemannian manifold with nonnegative Ricci curvature. Furthermore, we obtain the Faber-Krahn inequality for the first eigenvalue of Dirichlet Laplacian, $L^1$- and $L^\infty$-moment spectrum, especially Saint-Venant theorem for torsional rigidity and a reverse H\"older inequality for eigenfunctions of Dirichlet Laplacian.
\end{abstract}

\section{Introduction}

Let $\Omega \subset \R^n$ be a bounded domain in Euclidean space, $f\in L^2(\Omega)$  be nonnegative  and $u$ be the solution to 
\begin{equation*}
\begin{cases}
-\Delta u= f,  & \text{in}\quad\Omega,\\
u =0, & \text{on}\quad \partial\Omega,
\end{cases}
\end{equation*}
and $v$ be the solution to 
\begin{equation*}
\begin{cases}
-\Delta v= f^\sharp,  & \text{in}\quad\Omega^\sharp,\\
v =0, & \text{on}\quad \partial\Omega^\sharp,
\end{cases}
\end{equation*}
where $\Omega^\sharp$ denotes the ball centered at the origin  satisfying $\abs{\Omega^{\sharp}}=\abs{\Omega}$
and $f^\sharp$ is Schwarz rearrangement of $f$.  In 1976,  Talenti \cite{Talenti76}  proved 
\begin{equation*}
u^\sharp(x)\leq v(x), \qquad \text{for } \, x\in \Omega^\sharp.
\end{equation*}
Talenti's comparison results were generalized to semilinear and nonlinear elliptic equations, for instance, in \cite{ALT90, Talenti79}, and parabolic equation,  for instance, in \cite{Bandle76,ALT90}. In 2018, Colladay et al. \cite{ColladayLM18}  generalized Talenti's result to compact Riemannian manifolds whose  Ricci curvature has positive lower bound. Recently,  Talenti's comparison results were extended to  $\operatorname{RCD(K,N)}$ spaces in \cite{Mondin0Vedovato}. We also refer the reader to excellent books \cite{Kawohl,Kesavan2006, Baernstein19} for related topics.

The main aim in the present work  is to generalize Talenti's comparison result  to  complete noncompact Riemannian manifold with nonnegative Ricci curvature. Let $(M,g)$ be a  noncompact, complete $n$ $(n\ge 2)$ dimensional  Riemannian manifold with nonnegative Ricci curvature and  positive asymptotic volume ratio, i.e. 
$$
{\sf AVR}(g)=\lim_{r\to \infty}\frac{\abs{B_x(r)}}{\omega_n r^n}>0.
$$ 
where $B_x(r)$ stands for open metric ball centered at $x\in M$ with radius $r>0$,  $\omega_n$  denotes by  the volume of the unit ball in Euclidean space $\R^n$.
Let   $\Omega $ be an open, bounded domain of $(M,g)$ with smooth boundary $\partial \Omega$. For a given nonnegative ( not identically zero)  $f\in L^2(\Omega)$,  we consider the following  problem
\begin{equation}\label{Equ:Dirichlet1}
\left\{
\begin{array}{ll}
-\Delta_g u= f,  & \mbox{in $\Omega$},\\
u =0, & \mbox{on $\partial\Omega$.}
\end{array}
\right.
\end{equation}
We will establish a comparison principle with the solution to the following problem
\begin{equation}\label{Equ:Dirichlet2}
\left\{
\begin{array}{ll}
-\Delta v= f^\sharp, & \mbox{in $\Omega^\sharp$}\\
v =0, & \mbox{on $\partial\Omega^\sharp$,}
\end{array}
\right.
\end{equation}
where $\Omega^\sharp$ denotes Euclidean ball centered at the origin  satisfying $\AVR \abs{\Omega^{\sharp}}=\abs{\Omega}$
 and $f^\sharp$ is the  Schwarz rearrangement of $f$.  Based on the isoperimetric inequality in \cite{Brendle}, we have
 \begin{thm}[]\label{th_main_f} Let $(M,g)$ be a  noncompact, complete $n$-dimensional Riemannian manifold with  nonnegative Ricci curvature and  Euclidean volume growth, i.e. $\AVR>0$. Assume that $\Omega$ is a bounded domain in $M$, $f\in L^2(\Omega)$ is nonnegative and $u$  is the weak solution to Problem \eqref{Equ:Dirichlet1}.  Let $\Omega^\sharp$ be an Euclidean ball satisfying $\AVR \abs{\Omega^{\sharp}}=\abs{\Omega}$ and $v$ be the solution to Problem \eqref{Equ:Dirichlet2}. Then we have
\begin{equation}\label{compare:uv}
u^\sharp(x)\leq v(x), \qquad x\in \Omega^\sharp.
\end{equation}
Moreover, the equality  holds in \eqref{compare:uv} if and only if  $(M,g)$ is isometric to Euclidean space $(\R^n,g_0)$ and $\Omega$ is isoperimetric to Euclidean ball $\Omega^\sharp$, where $g_0$ is the canonical metric of Euclidean space.
\end{thm}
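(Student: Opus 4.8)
The plan is to run Talenti's rearrangement argument, with the Euclidean isoperimetric inequality replaced by Brendle's sharp inequality on $(M,g)$. First I would dispatch the elementary reductions. Since $f\ge 0$ and $u=0$ on $\partial\Omega$, the weak maximum principle gives $u\ge 0$ in $\Omega$, so the decreasing rearrangement $u^{*}$ of $u$ on $(0,\abs{\Omega})$ and the Schwarz rearrangement $u^{\sharp}$ on $\Omega^{\sharp}$ are well defined; likewise the datum $f^{\sharp}$ is radially symmetric and nonincreasing, hence so is $v$, and writing $v^{*}$ for the corresponding decreasing rearrangement one has $v(x)=v^{*}(\omega_{n}\abs{x}^{n})$ and $u^{\sharp}(x)=u^{*}(\AVR\,\omega_{n}\abs{x}^{n})$. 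Therefore \eqref{compare:uv} is equivalent to $u^{*}(\AVR\,\tau)\le v^{*}(\tau)$ for all $\tau\in(0,\abs{\Omega^{\sharp}})$, and everything reduces to estimating the distribution function $\mu(t):=\abs{\{u>t\}}$ of $u$.

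The heart of the matter is a differential inequality for $\mu$. For almost every $t\in(0,\operatorname{ess\,sup}_{\Omega}u)$ the set $\{u>t\}$ has smooth boundary and one has, exactly as in the classical case: (i) testing \eqref{Equ:Dirichlet1} against a truncation of $u$ at height $t$ and letting the truncation width tend to zero yields $\int_{\{u=t\}}\abs{\n u}\,d\mathcal H^{n-1}=\int_{\{u>t\}}f\,dV$; (ii) the co-area formula gives $-\mu'(t)=\int_{\{u=t\}}\abs{\n u}^{-1}\,d\mathcal H^{n-1}$ and $\Per(\{u>t\})=\mathcal H^{n-1}(\{u=t\})$. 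Cauchy--Schwarz on $\{u=t\}$ gives $\Per(\{u>t\})^{2}\le(-\mu'(t))\int_{\{u>t\}}f\,dV$, Brendle's isoperimetric inequality gives $\Per(\{u>t\})\ge n\,\omega_{n}^{1/n}(\AVR)^{1/n}\mu(t)^{(n-1)/n}$, and the Hardy--Littlewood inequality gives $\int_{\{u>t\}}f\,dV\le\int_{0}^{\mu(t)}f^{*}$. Combining these and using $(-\mu'(t))^{-1}=-\tfrac{d}{ds}u^{*}(s)$ at $s=\mu(t)$ yields
\[
-\frac{d}{ds}u^{*}(s)\le\frac{\int_{0}^{s}f^{*}(\sigma)\,d\sigma}{n^{2}\,\omega_{n}^{2/n}\,(\AVR)^{2/n}\,s^{2(n-1)/n}}\qquad\text{for a.e. }s\in(0,\abs{\Omega}),
\]
so integration from $s$ to $\abs{\Omega}$, together with $u^{*}(\abs{\Omega})=0$, bounds $u^{*}(s)$ by an explicit integral. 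Running the same computation on the ball $\Omega^{\sharp}$ for $v$, where every one of the above inequalities becomes an equality (because $\{v>t\}$ is a ball, $\abs{\n v}$ is constant on concentric spheres, and $f^{\sharp}$ is radial nonincreasing), produces the matching \emph{identity} for $v^{*}$. Finally I would record the identity $(f^{\sharp})^{*}(\sigma)=f^{*}(\AVR\,\sigma)$, change variables $s=\AVR\,\tau$ in the bound for $u^{*}$, and check that all powers of $\AVR$ cancel --- which happens precisely because $\tfrac{2}{n}+\tfrac{2(n-1)}{n}=2$ --- so that the bound for $u^{*}(\AVR\,\tau)$ becomes exactly the formula for $v^{*}(\tau)$. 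This proves $u^{*}(\AVR\,\tau)\le v^{*}(\tau)$, hence \eqref{compare:uv}.

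For the rigidity statement, the plan is to follow the equality through the chain. If \eqref{compare:uv} holds with equality at some interior point of $\Omega^{\sharp}$, then, by the monotone structure of the integral representations, every inequality above is an equality for a.e.\ $t$ in an interval $(0,t_{0})$ with $t_{0}>0$: equality in Cauchy--Schwarz forces $\abs{\n u}$ to be constant on almost every level set of $u$; equality in Hardy--Littlewood forces $\{u>t\}$ to coincide up to a null set with a superlevel set of $f$; and equality in Brendle's inequality forces $\{u>t\}$ to be isometric to a Euclidean ball. Moreover the equality point, being interior and the superlevel sets being centered balls, forces $\abs{\{u>0\}}=\abs{\Omega}$, so $\Omega$ (up to a null set) is exhausted by the flat balls $\{u>t\}$. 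Invoking the rigidity case of Brendle's isoperimetric inequality --- together with $\Ric\ge 0$ and the metric-cone structure at infinity imposed by $\AVR>0$ --- upgrades this to $(M,g)$ being isometric to $(\R^{n},g_{0})$, and in particular $\AVR=1$. Once this is known the problem is reduced to the Euclidean one, where the equality case of Talenti's theorem is classical: $\Omega$ is a ball, necessarily $\Omega^{\sharp}$, and $u=v$. The converse implication is immediate.

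I expect two steps to demand the most care. The first is the rearrangement calculus in the Sobolev/Riemannian setting: the absolute continuity of $t\mapsto\mu(t)$ and of $u^{*}$, the contribution of the critical set $\{\n u=0\}$, and the a.e.-$t$ validity of (i)--(ii); this is by now standard (co-area formula for Sobolev functions, Federer's theorem) but must be carried out carefully. The second, and genuinely harder, step is the rigidity: passing from ``$\{u>t\}$ is isometric to a Euclidean ball for a full-measure family of $t$'' to ``$(M,g)$ is flat'' is the real content of the equality statement, and it is there that the global hypotheses --- nonnegative Ricci curvature and Euclidean volume growth --- and the sharp rigidity in Brendle's theorem are indispensable.
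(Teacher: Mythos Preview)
Your proposal is correct and follows essentially the same route as the paper: Brendle's isoperimetric inequality, the co-area formula, Cauchy--Schwarz on level sets, and Hardy--Littlewood combine to give the differential inequality for $u^{*}$, which integrates to the pointwise bound, while the same ingredients yield the matching identity for $v$. The only cosmetic difference is bookkeeping: the paper builds the factor $\AVR$ into the rearrangement of $v$ from the outset (writing $v^{*}(s)=\gamma_{n}^{-2}\int_{s}^{\abs{\Omega}}\xi^{-2+2/n}F(\xi)\,d\xi$ with $\gamma_{n}=n(\AVR\,\omega_{n})^{1/n}$), whereas you keep the Euclidean $v^{*}$ on $[0,\abs{\Omega^{\sharp}}]$ and perform the change of variables $s=\AVR\,\tau$ at the end; your observation that the exponents $2/n$ and $2(n-1)/n$ sum to $2$ is exactly what makes these two bookkeepings agree. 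For the rigidity, the paper assumes the full equality $u^{\sharp}=v$ and, following Kesavan, reads off $\mu_{u}(t)=\AVR\,\mu_{v}(t)$ to force equality in the isoperimetric step for every $t$; your version (equality at a single interior point forces equality of the integrands on $[s_{0},\abs{\Omega}]$, hence Brendle rigidity for the low superlevel sets and then $\Omega=\bigcup\Omega_{t}$ is a ball) is a mild strengthening of the same argument. One small remark: the clause ``equality in Hardy--Littlewood forces $\{u>t\}$ to coincide with a superlevel set of $f$'' is not needed and not used in the paper --- the isoperimetric equality alone already pins down $(M,g)$ and $\Omega$.
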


As an application of Theorem \ref{th_main_f}, we obtain the Faber-Krahn inequality for the first eigenvalue of Dirichlet Laplacian.
\begin{cor}\label{cor:FK}
Let $(M,g)$ be a  noncompact, complete $n$-dimensional Riemannian manifold with  nonnegative Ricci curvature and  $\AVR>0$. Assume that $\Omega$ is a bounded domain in $M$, $\Omega^\sharp$ be an Euclidean ball satisfying $\AVR \abs{\Omega^{\sharp}}=\abs{\Omega}$  and $\lambda_1(\Omega)$ denotes the first eigenvalue of  Dirichlet Laplacian
\begin{equation}\label{Equ:DL}
\left\{
\begin{array}{ll}
-\Delta_g  u= \lambda_1(\Omega) \, u,  & \mbox{in $\Omega$},\\
u =0, & \mbox{on $\partial\Omega$.}
\end{array}
\right.
\end{equation}	
 Then we have
 \begin{equation}
 \label{ineq:FK}
 \lambda_1(\Omega^{\sharp})\leq \lambda_1(\Omega).
 \end{equation}
 where $\lambda_1(\Omega^\sharp)$ is the first eigenvalue of $\Omega^{\sharp}$.
Moreover, the equality  holds if and only if $(M,g)$ is isometric to Euclidean space $(\R^n,g_0)$ and $\Omega$ is isometric to Euclidean ball $\Omega^\sharp$.
\end{cor}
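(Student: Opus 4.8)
The plan is to run Theorem \ref{th_main_f} on the first Dirichlet eigenfunction of $\Omega$ and then use the symmetrized solution as a competitor in the Rayleigh quotient of $\Omega^\sharp$. First I would let $u$ be a first eigenfunction of \eqref{Equ:DL}, normalized so that $u>0$ in $\Omega$; by standard elliptic regularity $u\in H_0^1(\Omega)\cap L^\infty(\Omega)$, so $f:=\lambda_1(\Omega)\,u$ is a nonnegative, not identically zero element of $L^2(\Omega)$, and $u$ is exactly the weak solution of \eqref{Equ:Dirichlet1} with this datum. Theorem \ref{th_main_f} then produces the solution $v$ of \eqref{Equ:Dirichlet2} with datum $f^\sharp=\lambda_1(\Omega)\,u^\sharp$ (using the homogeneity of the Schwarz rearrangement under multiplication by the positive constant $\lambda_1(\Omega)$) and gives $u^\sharp\le v$ on $\Omega^\sharp$. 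Since $f^\sharp\ge 0$ is not identically zero, the strong maximum principle yields $v>0$ in $\Omega^\sharp$, and $v\in H_0^1(\Omega^\sharp)$.

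Next I would test the variational characterization
\[
\lambda_1(\Omega^\sharp)\le \frac{\int_{\Omega^\sharp}\abs{\nabla v}^2}{\int_{\Omega^\sharp} v^2}
\]
against this $v$. Integrating by parts and using $-\Delta v=f^\sharp=\lambda_1(\Omega)\,u^\sharp$, one gets $\int_{\Omega^\sharp}\abs{\nabla v}^2=\int_{\Omega^\sharp}v(-\Delta v)=\lambda_1(\Omega)\int_{\Omega^\sharp}v\,u^\sharp$. Because $0\le u^\sharp\le v$ and $v\ge 0$ pointwise on $\Omega^\sharp$, we have $v\,u^\sharp\le v^2$, hence $\int_{\Omega^\sharp}\abs{\nabla v}^2\le \lambda_1(\Omega)\int_{\Omega^\sharp}v^2$, which is precisely \eqref{ineq:FK}. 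Note that the inequality itself needs nothing beyond $u^\sharp\le v$ and positivity of $v$; no equimeasurability bookkeeping is required.

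For the rigidity I would trace back the two inequalities. If $\lambda_1(\Omega^\sharp)=\lambda_1(\Omega)$, then $v$ realizes the minimum of the Rayleigh quotient on $\Omega^\sharp$, so $v$ is a first Dirichlet eigenfunction of $\Omega^\sharp$; moreover $\int_{\Omega^\sharp}v\,u^\sharp=\int_{\Omega^\sharp}v^2$, i.e. $\int_{\Omega^\sharp}v(v-u^\sharp)=0$ with integrand $\ge 0$, so $v(v-u^\sharp)=0$ a.e., and since $v>0$ in $\Omega^\sharp$ this forces $u^\sharp=v$ a.e. Thus equality holds in \eqref{compare:uv}, and the rigidity clause of Theorem \ref{th_main_f} forces $(M,g)$ to be isometric to $(\R^n,g_0)$ and $\Omega$ to be a ball isometric to $\Omega^\sharp$; here one uses that $M=\R^n$ gives $\AVR=1$, hence $\abs{\Omega^\sharp}=\abs{\Omega}$, so the two balls are indeed congruent. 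The converse is immediate: congruent Euclidean domains have the same Dirichlet spectrum.

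The one genuinely delicate point is the equality analysis — one must check that the chain of inequalities is saturated term by term and that the resulting identity $u^\sharp=v$ is exactly the hypothesis that triggers the rigidity part of Theorem \ref{th_main_f}; the inequality \eqref{ineq:FK} itself is then a one-line consequence of that theorem together with the Rayleigh quotient.
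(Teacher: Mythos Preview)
Your argument is correct and follows essentially the same route as the paper: apply Theorem~\ref{th_main_f} with $f=\lambda_1(\Omega)\,u$ to obtain $u^\sharp\le v$ where $-\Delta v=\lambda_1(\Omega)\,u^\sharp$ on $\Omega^\sharp$, then use $v$ as a test function in the Rayleigh quotient for $\lambda_1(\Omega^\sharp)$ together with $\int_{\Omega^\sharp} v\,u^\sharp\le\int_{\Omega^\sharp} v^2$. Your treatment of the equality case via $u^\sharp=v$ and the rigidity clause of Theorem~\ref{th_main_f} is likewise the same as the paper's (which attributes the idea to Kesavan).
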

\begin{rem} The Faber-Krahn inequality \eqref{ineq:FK} has been  proved in \cite{FM202012} for $3\leq n\leq 7$ and in \cite{BK202012} for all dimension $n$, both in terms of   P\'{o}lya-Szeg\"o principle. However, the proof we given here differs from theirs and the equality case follows Talenti's comparison result  in Theorem \ref{th_main_f} directly. The inequality \eqref{ineq:FK}   can also be written as
	\begin{equation*}
	  j^2_{\frac{n}{2}-1,1}(\omega_n\, \AVR)^{\frac{2}{n}} \leq \lambda_1(\Omega)\abs{\Omega}^{\frac2n},
	\end{equation*}
	where $j_{\frac{n}{2}-1,1}$ denotes the first positive zero of  Bessel function of the first kind with order $\frac{n}{2}-1$.
\end{rem}

From the Faber-Krahn inequality \eqref{ineq:FK}, we can deduce  estimates for the second eigenvalue of Laplacian.

\begin{cor}\label{cor:HKS}
Under the same assumptions as in Corollary \eqref{cor:FK}, 
$\lambda_1(\Omega)$ denotes the second eigenvalue of  Dirichlet Laplacian
\begin{equation}\label{Equ:DL2}
	\left\{
	\begin{array}{ll}
		-\Delta_g  u= \lambda_2(\Omega) \, u,  & \quad\text{in} \quad \Omega,\\
		u =0, & \quad\text{on} \quad\partial\Omega.
	\end{array}
	\right.
\end{equation}
Then we have
\begin{equation}\label{ineq:HKS}
	\lambda_2(\Omega)> 2^\frac{2}{n}j^2_{\frac{n}{2}-1,1}\left(\frac{\omega_n\, \AVR}{\abs{\Omega}}\right)^{\frac{2}{n}}.
\end{equation}
\end{cor}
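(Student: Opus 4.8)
The plan is to run the classical Hong--Krahn--Szeg\H{o} argument on the nodal domains of a second eigenfunction, feeding the Faber--Krahn inequality of Corollary \ref{cor:FK} into each piece. First I would fix an eigenfunction $u$ associated with $\lambda_2(\Omega)$; since $u$ is $L^2(\Omega)$-orthogonal to a first eigenfunction, and the latter can be taken strictly positive, $u$ must change sign. Set $\Omega_+=\{x\in\Omega:u(x)>0\}$ and $\Omega_-=\{x\in\Omega:u(x)<0\}$; these are nonempty, disjoint, open subsets of $\Omega$, and since the nodal set $\{u=0\}$ has measure zero (unique continuation for $-\Delta_g u=\lambda_2(\Omega)\,u$), we get $\abs{\Omega_+}+\abs{\Omega_-}=\abs{\Omega}$.

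Next I would show $\lambda_1(\Omega_\pm)\le\lambda_2(\Omega)$: the positive part $u^+\in H_0^1(\Omega)$ is supported in $\overline{\Omega_+}$, hence lies in $H_0^1(\Omega_+)$, and using it in the Rayleigh quotient of $\Omega_+$ together with $\int_{\Omega_+}\abs{\nabla u}^2=\lambda_2(\Omega)\int_{\Omega_+}u^2$ gives $\lambda_1(\Omega_+)\le\lambda_2(\Omega)$, and symmetrically for $\Omega_-$. Then I apply Corollary \ref{cor:FK} to $\Omega_+$ and $\Omega_-$: writing $\Omega_\pm^\sharp$ for the Euclidean ball with $\AVR\,\abs{\Omega_\pm^\sharp}=\abs{\Omega_\pm}$, and recalling that the first Dirichlet eigenvalue of a Euclidean ball of radius $R$ is $j^2_{\frac{n}{2}-1,1}/R^2$, one gets
\begin{equation*}
\lambda_2(\Omega)\ \ge\ \lambda_1(\Omega_\pm)\ \ge\ \lambda_1(\Omega_\pm^\sharp)\ =\ j^2_{\frac{n}{2}-1,1}\left(\frac{\omega_n\,\AVR}{\abs{\Omega_\pm}}\right)^{\frac{2}{n}}.
\end{equation*}
Since $\abs{\Omega_+}+\abs{\Omega_-}=\abs{\Omega}$, at least one nodal domain, say $\Omega_+$, has $\abs{\Omega_+}\le\abs{\Omega}/2$, and inserting this bound into the estimate for $\Omega_+$ yields
\begin{equation*}
\lambda_2(\Omega)\ \ge\ j^2_{\frac{n}{2}-1,1}\left(\frac{2\,\omega_n\,\AVR}{\abs{\Omega}}\right)^{\frac{2}{n}}\ =\ 2^{\frac{2}{n}}\,j^2_{\frac{n}{2}-1,1}\left(\frac{\omega_n\,\AVR}{\abs{\Omega}}\right)^{\frac{2}{n}},
\end{equation*}
i.e.\ \eqref{ineq:HKS} with $\ge$ in place of $>$.

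The hard part is upgrading this to a \emph{strict} inequality, which rests on the rigidity in Corollary \ref{cor:FK}. If equality held, then every inequality in the chain $\lambda_2(\Omega)\ge\lambda_1(\Omega_+)\ge j^2_{\frac{n}{2}-1,1}(\omega_n\AVR/\abs{\Omega_+})^{\frac{2}{n}}\ge j^2_{\frac{n}{2}-1,1}(2\omega_n\AVR/\abs{\Omega})^{\frac{2}{n}}$ would be an equality. Equality in the middle (Faber--Krahn for $\Omega_+$) forces, via Corollary \ref{cor:FK}, that $(M,g)$ is isometric to $(\R^n,g_0)$ and that $\Omega_+$ is a Euclidean ball; equality in the last step forces $\abs{\Omega_+}=\abs{\Omega}/2$, hence $\abs{\Omega_-}=\abs{\Omega}/2$ and, by the same reasoning, $\Omega_-$ a Euclidean ball. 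Then $\overline\Omega=\overline{\Omega_+}\cup\overline{\Omega_-}$ would be a union of two Euclidean balls with disjoint interiors, which is either disconnected---contradicting that $\Omega$ is a domain---or a pair of externally tangent balls, whose boundary is not a smooth hypersurface at the tangency point---contradicting the smoothness of $\partial\Omega$. Thus \eqref{ineq:HKS} is strict. Besides this rigidity discussion, the one technical point to pin down is that Corollary \ref{cor:FK} (or at least the inequality \eqref{ineq:FK}) is available for the a priori non-smooth nodal domains $\Omega_\pm$; this is handled by exhausting $\Omega_\pm$ from inside by smooth domains and using the variational characterization of $\lambda_1$ together with the continuity of the ball eigenvalue in the volume.
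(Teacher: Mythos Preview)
Your proof is correct and follows essentially the same Hong--Krahn--Szeg\H{o} route as the paper: split $\Omega$ into the nodal domains $\Omega_\pm$ of a second eigenfunction, apply the Faber--Krahn inequality of Corollary~\ref{cor:FK} to each, and read off \eqref{ineq:HKS}. The only cosmetic difference is that you single out the smaller nodal domain directly, whereas the paper adds the two Faber--Krahn bounds and invokes the convexity of $t\mapsto t^{-2/n}$; your rigidity discussion (explicitly excluding the tangent-balls configuration) and your remark on approximating the a priori non-smooth $\Omega_\pm$ by smooth domains are, if anything, more careful than the paper's own treatment.
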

\begin{rem}
	When  $M^n$ is Euclidean space $\R^n$,  the result is known as  Krahn-Hong-Szeg\"{o} inequality.
\end{rem}

As another application of Theorem \ref{th_main_f}, we will establish the comparison theorem  for so called $L^1$ and $L^\infty$ moment spectrum  between complete Riemannian manifold and Euclidean space. Let $(M, g)$ be a complete Riemannian manifold and  $\Omega\subset M$ be a smoothly bounded domain. 
Assume that   $u$ solves the equation 
\begin{equation}\label{Equ:Torsion}
\left\{
\begin{array}{ll}
-\Delta_g u= 1,  & \text{in}\, \Omega,\\
u =0, & \text{on}\, \partial\Omega.
\end{array}
\right.
\end{equation}
The  torsional rigidity $T(\Omega)$  of  $\Omega$ is defined by
\begin{equation}
T(\Omega) = \int_{\Omega} u(x)\, d\mu_g(x).
\end{equation}
For the background on torsional rigidity, one can refer to \cite{Polya48,Baernstein19, Kesavan2006}.
In general,  let $u_k$ be  solution of a hierarchy of Poisson equation
\begin{equation}\label{Equ:moment spec}
\left\{\begin{aligned}
-\Delta_g u_k=& ku_{k-1} \quad \text{in }\Omega,\\
 u_k=&0 \quad \text{on }\partial \Omega,\qquad k=1,\cdots,
\end{aligned}\right.
\end{equation}
where  $u_0=1$ by convention. For a positive integer $k$, we define
\begin{equation}\label{L1-momnet-spec}
T_k(\Omega)=\int_{\Omega}u_kd\mu_g,\qquad \text{for}\quad k=1,\cdots,
\end{equation}
and   
\begin{equation}\label{Equ:L-infty-moment spec}
J_k(\Omega)=\sup_{x\in\Omega} u_{k},\qquad \text{for}\quad k=1,\cdots.
\end{equation}
The collection $\{T_k(\Omega) \}_{k=1}^{\infty}$ and $\{J_k(\Omega)\}_{k=1}^{\infty}$ are called the $L^1$-moment spectrum and  $L^\infty$-moment spectrum of $\Omega$, respectively.
For interpretation for $L^1$ and $L^\infty$-moment spectrum in probability theory, we refer the reader to \cite{CGL15,MD02,MD13}. For related results for  
torsional rigidity, $L^1$ and $L^\infty$-moment spectrum, one can consult \cite{AS,BS,CGL15,HChen,ColladayLM18,GHM2015,HMP16,KMM,MD02,MD13} and references therein.
\begin{cor}\label{cor:torsion}Let $(M,g)$ be a  noncompact, complete $n$-dimensional Riemannian manifold with  nonnegative Ricci curvature and  $\AVR>0$. Assume that $\Omega$ is a bounded domain in $M$ and $\Omega^\sharp$ is an Euclidean ball satisfying $\AVR \abs{\Omega^{\sharp}}=\abs{\Omega}$ .  For the  $L^1$-moment spectrum of $\Omega$, we have
	\begin{equation}\label{inequ:Saint-Vent}
	T_k(\Omega) \leq \AVR \, T_k(\Omega^\sharp),\quad k=1,\cdots.
	\end{equation}
In particular, for $k=1$, the Saint-Venant inequality holds
\begin{equation}\label{Inq:Saint-Venant}
T(\Omega) \leq \AVR \, T(\Omega^\sharp).
\end{equation}	
For $L^\infty$-moment spectrum of $\Omega$, we have 
\begin{equation}\label{inequ:L-infty}
J_k(\Omega)\leq J_k(\Omega^\sharp).
\end{equation}
Moreover, equality  holds for any $k$ in \eqref{Inq:Saint-Venant} or \eqref{inequ:L-infty} if and only if  $(M,g)$ is isometric to Euclidean space $(\R^n,g_0)$ and $\Omega$ is isometric to Euclidean ball $\Omega^\sharp$.
\end{cor}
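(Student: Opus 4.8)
The plan is to iterate Theorem \ref{th_main_f} along the hierarchy \eqref{Equ:moment spec}. First, observe that $u_1$ solves $-\Delta_g u_1 = 1 = u_0$ on $\Omega$ with zero boundary values, and $v_1$ (the solution on $\Omega^\sharp$ of $-\Delta v_1 = 1^\sharp = 1$) is its symmetrized competitor; since the right-hand side is already constant, $1^\sharp = 1$ and Theorem \ref{th_main_f} gives $u_1^\sharp \le v_1$ pointwise on $\Omega^\sharp$. I claim by induction that $u_k^\sharp \le v_k$ on $\Omega^\sharp$ for every $k$, where $v_k$ is defined on $\Omega^\sharp$ by $-\Delta v_k = k v_{k-1}$, $v_k = 0$ on $\partial\Omega^\sharp$. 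The inductive step requires two ingredients: (i) $u_k \ge 0$ on $\Omega$, which follows from the maximum principle since $u_{k-1} \ge 0$ inductively; and (ii) the monotonicity of the operator $f \mapsto v$ (solution of the symmetrized problem) together with the rearrangement inequality $(k u_{k-1})^\sharp = k\, u_{k-1}^\sharp \le k\, v_{k-1}$ — here the last inequality uses the inductive hypothesis and the fact that $v_{k-1}$ is already radially decreasing, so it is its own rearrangement and dominating a function pointwise after rearrangement is preserved when passing to the PDE solution. Applying Theorem \ref{th_main_f} with right-hand side $f = k u_{k-1}$ yields $u_k^\sharp \le w$, where $-\Delta w = (k u_{k-1})^\sharp = k u_{k-1}^\sharp$ on $\Omega^\sharp$; then comparing $w$ with $v_k$ via the maximum principle (using $k u_{k-1}^\sharp \le k v_{k-1}$) gives $w \le v_k$, completing the induction.

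Once $u_k^\sharp \le v_k$ is established, the $L^1$ bound \eqref{inequ:Saint-Vent} follows from the layer-cake / equimeasurability identity:
\begin{equation*}
T_k(\Omega) = \int_\Omega u_k \, d\mu_g = \int_{\Omega^\sharp} u_k^\sharp \, dx \cdot \frac{|\Omega|}{|\Omega^\sharp|} \Big/ \AVR^{?}
\end{equation*}
— more carefully, I need to track how the rearrangement is normalized. Since $\Omega^\sharp$ satisfies $\AVR\,|\Omega^\sharp| = |\Omega|$ and $f^\sharp$ is built so that its distribution function on $\Omega^\sharp$ (with the Euclidean measure rescaled by $\AVR$, or equivalently with super-level sets of volume $\AVR$ times those on $M$) matches that of $f$ on $M$, the identity $\int_\Omega u_k = \AVR \int_{\Omega^\sharp} u_k^\sharp$ holds; combined with $u_k^\sharp \le v_k$ and $\int_{\Omega^\sharp} v_k = T_k(\Omega^\sharp)$ this gives $T_k(\Omega) \le \AVR\, T_k(\Omega^\sharp)$. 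The $L^\infty$ bound \eqref{inequ:L-infty} is immediate: $\sup_\Omega u_k = \sup_{\Omega^\sharp} u_k^\sharp \le \sup_{\Omega^\sharp} v_k = J_k(\Omega^\sharp)$, using that $u_k^\sharp$ and $u_k$ have the same essential supremum and that $v_k$ is the natural candidate for $J_k(\Omega^\sharp)$ once one checks $v_k = u_k^{\sharp\sharp}$-type comparison extends, i.e. $v_k$ itself realizes the moment spectrum of the ball (this is classical on $\R^n$).

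For the rigidity statement, equality in \eqref{Inq:Saint-Venant} forces equality in Theorem \ref{th_main_f} applied with $f \equiv 1$ (the case $k=1$), so the rigidity clause of that theorem gives directly that $(M,g)$ is isometric to $(\R^n, g_0)$ and $\Omega$ is isometric to $\Omega^\sharp$. For equality in \eqref{inequ:L-infty} at some $k$, I would argue that $\sup u_k = \sup v_k$ together with $u_k^\sharp \le v_k$ and the strong maximum principle forces $u_k^\sharp(0) = v_k(0)$, and then propagate equality back down the hierarchy to $k=1$, again invoking the rigidity in Theorem \ref{th_main_f}; the converse direction (equality when $M = \R^n$ and $\Omega$ is a ball) is trivial since then $u_k = v_k$. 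The main obstacle I anticipate is bookkeeping the $\AVR$ normalization consistently through the iteration — making sure the rearrangement $(k u_{k-1})^\sharp$ is taken with respect to the same convention as in Theorem \ref{th_main_f} — and verifying cleanly that $v_k$ (defined purely on the Euclidean ball) is indeed the function whose integral is $T_k(\Omega^\sharp)$ and whose supremum is $J_k(\Omega^\sharp)$; the PDE-monotonicity and maximum-principle steps are routine.
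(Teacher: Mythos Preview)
Your argument for the inequalities is essentially the paper's: introduce the auxiliary $w_k$ on $\Omega^\sharp$ solving $-\Delta w_k = k\,u_{k-1}^\sharp$, get $u_k^\sharp \le w_k$ from Theorem~\ref{th_main_f} and $w_k \le v_k$ from the elliptic maximum principle plus the inductive hypothesis, then read off the $L^1$ and $L^\infty$ bounds. Your normalization worry is unfounded: the identity $\int_\Omega u_k = \AVR \int_{\Omega^\sharp} u_k^\sharp$ is exactly \eqref{Eqn:equi} with $p=1$, and $v_k$ is by definition the $k$-th moment function of $\Omega^\sharp$, so $\int_{\Omega^\sharp} v_k = T_k(\Omega^\sharp)$ and $\sup v_k = J_k(\Omega^\sharp)$ hold tautologically.

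Where you diverge from the paper is the rigidity for \eqref{inequ:L-infty}. The paper does \emph{not} propagate down the hierarchy via the strong maximum principle; instead it works with the distribution functions $\mu_k(t)=|\{u_k>t\}|$ and $\nu_k(t)=|\{v_k>t\}|$, builds primitives $\widetilde H_k(\eta)=\gamma_n^{-2}\int_\eta^{|\Omega|}\xi^{-2+2/n}\int_0^\xi u_{k-1}^*\,d\xi$ and the analogous $\widetilde G_k$, and shows that $\zeta(t):=\widetilde G_k(\AVR\,\nu_k(t))-\widetilde H_k(\mu_k(t))$ is nonincreasing with $\zeta(0)=0$ and $\zeta(J_k)\ge 0$, forcing $\zeta\equiv 0$ and hence $\mu_k(t)=\AVR\,\nu_k(t)$ for all $t$. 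Your alternative idea is viable and arguably shorter, but as written it is not quite right: the observation $u_k^\sharp(0)=v_k(0)$ is immediate from $J_k(\Omega)=J_k(\Omega^\sharp)$ and needs no maximum principle, and the strong maximum principle cannot be applied to $v_k-u_k^\sharp$ since $u_k^\sharp$ satisfies no PDE. What does work is to note that $u_k^\sharp(0)\le w_k(0)\le v_k(0)$ forces $w_k(0)=v_k(0)$, so the nonnegative function $v_k-w_k$, which satisfies $-\Delta(v_k-w_k)=k(v_{k-1}-u_{k-1}^\sharp)\ge 0$, attains an interior minimum and hence vanishes identically; this gives $v_{k-1}=u_{k-1}^\sharp$, and iterating down to $k=1$ lands you in the equality case of Theorem~\ref{th_main_f} with $f\equiv 1$.
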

\begin{rem}The inequality \eqref{Equ:L-infty-moment spec} can also be written by
		\begin{equation}\label{inequ:Saint-Ventk}
		\begin{aligned}
		T_k(\Omega)\abs{\Omega}^{-\frac{n+2k}{n}} \leq &\left( \AVR \omega_n\right)^{-\frac{2k}{n}} \, T_k(B_1)\omega_n^{-1}, \,  \quad k=1,\cdots,
		\end{aligned}
		\end{equation}
	where $B_1$ is the unit ball in Euclidean space $\mathbb{R}^n$.
	
The quantity $T_k(B_1)\omega_n^{-1}$ can be computed explicitly,	for instant, $k=1,2$,
	\begin{equation*}
		\frac{T_1(B_1)}{\omega_n}=\frac{1}{n(n+2)},\qquad \frac{T_2(B_1)}{\omega_n}=\frac{4}{n^2(n+2)(n+4)}. 
	\end{equation*}
In particular, the Saint-Venant inequality \eqref{Inq:Saint-Venant} can also be written as
		\begin{equation}\label{inequ:Saint-Vent1}
		\left(\AVR \omega_n\right)^{\frac 2n}T(\Omega)\abs{\Omega}^{-\frac{n+2}{n}}\leq \frac{1}{n(n+2)}.
		\end{equation}
\end{rem}

Our another aim here is to obtain a reverse H\"older inequality for eigenfunctions of Dirichlet eigenvalue problem. In 1972, Payne and Rayner \cite{PR72} proved that the eigenfunction $u$ of the Dirichlet Laplacian 
corresponding to the first eigenvalue $\lambda_1(\Omega)$ for a bounded planar domain $\Omega\subset \R^2$ 
satisfies a reverse H\"older inequality
\begin{equation*}
\frac{\norm{u}_{L^2(\Omega)}}{\norm{u}_{L^1(\Omega)}} \leq\frac{\sqrt{\lambda_1(\Omega)}}{2\sqrt{\pi}}  .
\end{equation*}
The equality occurs  if and only if $\Omega$ is a disk. In 1981,
Kohler-Jobin \cite{KJ81} obtained an isoperimetric comparison  between  the $L^2$ and  $L^1$ norms of the eigenfunction for Dirichlet Laplacian for bounded domain in $\R^n (n\geq 3)$. In 1982, Chiti \cite{Chiti82} 
 generalized   the reverse H\"older inequality for the norms $L_{q}$
 and $L_{p},$  $q\geq p >0$, for bounded domains of $\mathbb{R}^{n} (n\geq2).$ Chiti's comparison results was generalized to bounded domain in hemisphere by Ashbaugh and Benguria \cite{AB01}  and in hyperbolic space by Benguria and Linde \cite{BL07}, respectively. Using isoperimetric comparison the result in \cite{Chiti82}   was extended to compact Riemannian manifolds whose Ricci curvature has positive lower bound and the integral Ricci curvature condition \cite{HChen}. In this paper, we extend Chiti's result to complete Riemannian manifolds with nonnegative Ricci curvature.
\begin{thm}[]{}\label{thm:Chiti}
Let $(M,g)$ be a  noncompact, complete $n$-dimensional Riemannian manifold with  nonnegative Ricci curvature and  $\AVR>0$. Let $\Omega$ be a bounded domain in $(M,g)$ and $u$  be one of  solution	 corresponding to Dirichlet eigenvalue problem
\begin{equation}\label{Equ:DLM}
\begin{cases}
-\Delta_g  u= \lambda \, u,  & \mbox{in $\Omega$},\\
u =0, & \text{on} \, \partial\Omega.
\end{cases}
\end{equation}
For real numbers  $p$ and  $q$ satisfying $q \geq p > 0$, then we have
		\begin{equation}\label{Chiti0}
		\frac{\norm{u}_{L^q(\Omega)}}{\norm{u}_{L^p(\Omega)}} \leq K\left(p,q,\lambda,n,\AVR\right),
		\end{equation}
		where 
		\begin{equation}\label{}
	K\left(p,q,\lambda,n,\AVR\right)=
	\left(n\omega_{n}\AVR\left(j_{\frac n2-1,1}\ld^{-\frac12} \right)^n\right)^{\frac{1}{q}-\frac{1}{p}}	\displaystyle\frac{\left(\int_{0}^{1}r^{n-1+q(1-\frac{n}{2})}J_{\frac{n}{2}-1}^{q}(j_{\frac{n}{2}-1,1}\,r)dr\right)^{\frac{1}{q}}
	}{\left(\int_{0}^{1}r^{n-1+p(1-\frac{n}{2})}J_{\frac{n}{2}-1}^{p}(j_{\frac{n}{2}-1,1}\,r)dr\right)^{\frac{1}{p}}}.
		\end{equation}
Furthermore, equality  holds in \eqref{Chiti0}  if and only if $(M,g)$ is isometric to Euclidean space $(\R^n,g_0)$,  $\Omega$ is isometric to Euclidean ball  with radius $j_{\frac n2-1,1} \ld^{-1/2}$ and $\ld$ is the first eigenvalue of Dirichlet eigenvalue problem \eqref{Equ:DLM}.
\end{thm}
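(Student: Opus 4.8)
The plan is to transplant Chiti's symmetrization argument to the present setting, the Euclidean balls being replaced by the comparison balls of Theorem~\ref{th_main_f} and the isoperimetric input being Brendle's sharp inequality $\Per(E)\ge n\omega_n^{1/n}\AVR^{1/n}\abs{E}^{\frac{n-1}{n}}$ already used there. Since $\norm{u}_{L^r(\Omega)}=\norm{\abs u}_{L^r(\Omega)}$ for every $r$, and for $t>0$ the identity $\int_{\{\abs u>t\}}(-\Delta_g\abs u)=\lambda\int_{\{\abs u>t\}}\abs u$ still holds (on each component of $\{u>0\}$ and of $\{u<0\}$, $\abs u$ solves the eigenvalue equation with zero boundary values, and for $t>0$ the level set $\{\abs u=t\}$ lies in the interior), I may assume $u\ge 0$. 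Write $\mu(t)=\abs{\{u>t\}}$ and let $u^{*}\colon[0,\abs\Omega]\to[0,\infty)$ be the decreasing rearrangement, so $\norm{u}_{L^r(\Omega)}^{r}=\int_0^{\abs\Omega}(u^{*})^{r}$. Combining the coarea formula, the pointwise Cauchy--Schwarz inequality $\Per(\{u>t\})^{2}\le\big(\int_{\{u>t\}}(-\Delta_g u)\big)(-\mu'(t))$, the equation $-\Delta_g u=\lambda u$, and Brendle's inequality, exactly as in the proof of Theorem~\ref{th_main_f}, one obtains, for a.e.\ $s\in(0,\abs\Omega)$,
\[
-\frac{du^{*}}{ds}(s)\ \le\ c\,s^{\frac2n-2}\!\int_0^{s}u^{*}(\sigma)\,d\sigma,\qquad c:=\frac{\lambda}{n^{2}\omega_n^{2/n}\AVR^{2/n}},
\]
together with $u^{*}(\abs\Omega)=0$.

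Next I set up the model. Put $R=j_{\frac n2-1,1}\lambda^{-1/2}$ and let $z(x)=\abs x^{1-\frac n2}J_{\frac n2-1}(\sqrt\lambda\,\abs x)$ be the first Dirichlet eigenfunction of the Euclidean ball $B_R$; being radial and strictly decreasing in $\abs x$, it realizes equality in all of the steps above, so its decreasing rearrangement $z^{*}$ on $[0,\omega_nR^{n}]$ satisfies the displayed differential inequality \emph{with equality} and with $\AVR$ replaced by $1$. A one-line rescaling then shows that
\[
\psi(s):=z^{*}\!\big(s/\AVR\big),\qquad s\in[0,V_0],\qquad V_0:=\AVR\,\omega_nR^{n},
\]
satisfies that relation with equality and with the correct constant $c$, with $\psi(V_0)=0$. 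Moreover $V_0\le\abs\Omega$: by the Faber--Krahn inequality (Corollary~\ref{cor:FK}), $\lambda\ge\lambda_1(\Omega)\ge\lambda_1(\Omega^\sharp)=j_{\frac n2-1,1}^{2}r_\sharp^{-2}$ with $r_\sharp$ the radius of $\Omega^\sharp$, so $R\le r_\sharp$ and $V_0=\AVR\,\omega_nR^{n}\le\AVR\,\omega_nr_\sharp^{\,n}=\abs\Omega$.

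The core is a Chiti-type comparison lemma, which I expect to be the main obstacle. Normalize $\psi$ by a positive scalar — the differential inequality is homogeneous — so that $\int_0^{V_0}\psi=\int_0^{V_0}u^{*}$, and put $G(s)=\int_0^{s}u^{*}$, $H(s)=\int_0^{s}\psi$. The key observation is that the Wronskian $W:=u^{*}H-G\psi\ (=G'H-GH')$ is nondecreasing: $W$ is absolutely continuous with $W'=H\big((u^{*})'+c\,s^{\frac2n-2}G\big)\ge 0$ a.e.\ (using $H\ge 0$, the relation $-\psi'=c\,s^{\frac2n-2}H$, and the differential inequality), and $W(0)=0$, so $W\ge 0$, i.e.\ $G/H$ is nondecreasing on $(0,V_0)$; combined with the normalization $G(V_0)=H(V_0)$ this gives $G\le H$ on $[0,V_0]$. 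Then $-\tfrac{du^{*}}{ds}\le c\,s^{\frac2n-2}G\le c\,s^{\frac2n-2}H=-\psi'$, so $u^{*}-\psi$ is nondecreasing on $[0,V_0]$; since $\int_0^{V_0}(u^{*}-\psi)=0$ and $(u^{*}-\psi)(V_0)=u^{*}(V_0)\ge 0$, there is $s_0$ with $u^{*}\le\psi$ on $[0,s_0]$ and $u^{*}\ge\psi$ on $[s_0,\abs\Omega]$ (using $\psi\equiv 0$ on $[V_0,\abs\Omega]$). Now, for $q\ge p>0$, write $(u^{*})^{q}-\psi^{q}=\tfrac qp\,\xi^{q-p}\big((u^{*})^{p}-\psi^{p}\big)$ with $\xi(s)$ between $u^{*}(s)$ and $\psi(s)$ (Cauchy's mean value theorem for $t\mapsto t^{q/p}$); since $u^{*},\psi$ are nonincreasing, $\xi\ge m:=u^{*}(s_0)=\psi(s_0)$ where $(u^{*})^{p}-\psi^{p}\le 0$ and $\xi\le m$ where $(u^{*})^{p}-\psi^{p}\ge 0$, so in either case $(u^{*})^{q}-\psi^{q}\le\tfrac qp\,m^{q-p}\big((u^{*})^{p}-\psi^{p}\big)$; integrating and using $\int(u^{*})^{p}=\int\psi^{p}$ yields $\int_0^{\abs\Omega}(u^{*})^{q}\le\int_0^{V_0}\psi^{q}$, hence
\[
\frac{\norm{u}_{L^q(\Omega)}}{\norm{u}_{L^p(\Omega)}}=\frac{\norm{u^{*}}_{L^q}}{\norm{u^{*}}_{L^p}}\ \le\ \frac{\norm{\psi}_{L^q(0,V_0)}}{\norm{\psi}_{L^p(0,V_0)}}.
\]
The right-hand side is scale invariant; substituting $s=\AVR\,\omega_n\varrho^{n}$ turns $\int_0^{V_0}\psi^{r}\,ds$ into $\AVR\int_{B_R}\abs{z}^{r}\,dx$, and then $\varrho\mapsto R\varrho$, together with $\AVR\,n\omega_nR^{n}=n\omega_n\AVR(j_{\frac n2-1,1}\lambda^{-1/2})^{n}$, identifies the ratio with $K(p,q,\lambda,n,\AVR)$.

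Finally, equality in \eqref{Chiti0} forces equality at every step: in Brendle's inequality for a.e.\ superlevel set $\{u>t\}$, in the Cauchy--Schwarz step (so $\abs{\nabla u}$ is constant on each level set), and $u^{*}\equiv\psi$ in the last estimate. By the rigidity of Brendle's inequality invoked for Theorem~\ref{th_main_f}, $(M,g)$ is then isometric to $(\R^{n},g_0)$ and the superlevel sets are concentric balls; with $u^{*}\equiv\psi$ this shows $\Omega$ is a Euclidean ball on which $u$ is radial and proportional to $z$, so $\lambda=\lambda_1(\Omega)$ and $\Omega$ has radius $R=j_{\frac n2-1,1}\lambda^{-1/2}$. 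Conversely, on such a ball both sides of \eqref{Chiti0} coincide.
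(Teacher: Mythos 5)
Your overall strategy---deriving the Gronwall--type inequality $-\frac{du^*}{ds}\le c\,s^{\frac2n-2}\int_0^su^*$ from Brendle's isoperimetric inequality exactly as in Theorem~\ref{th_main_f}, comparing with the rearranged first eigenfunction $\psi$ of the Euclidean ball of radius $R=j_{\frac n2-1,1}\lambda^{-1/2}$, and reducing \eqref{Chiti0} to a single-crossing statement plus an elementary mean-value estimate---is the paper's strategy, and your setup is sound: the rescaling producing $\psi$, the verification $V_0\le\abs{\Omega}$ via Faber--Krahn, the reduction to $u\ge0$, and the identification of the constant $K$ are all correct. Your route to the single-crossing property is genuinely different: you observe that $W=u^*H-G\psi$ satisfies $W'=H\big((u^*)'+cs^{\frac2n-2}G\big)\ge0$, so $G/H$ is nondecreasing, whereas the paper first proves a pointwise comparison $v^*\le u^*$ under the sup-normalization $\max u=v(0)$ (Lemma~\ref{comp-pointwise-chiti}) and then excludes a second crossing by a Rayleigh-quotient contradiction with a glued test function. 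If your Wronskian argument could be completed it would be shorter than the paper's two-step variational argument.

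There is, however, a genuine gap in the normalization, and it sits exactly at the delicate point of Chiti's argument. You prove the single crossing of $u^*$ and $\psi$ under the normalization $\int_0^{V_0}\psi=\int_0^{V_0}u^*$: this is what gives $G(V_0)=H(V_0)$, hence $G\le H$ from the monotonicity of $G/H$, hence $-\,(u^*)'\le cs^{\frac2n-2}G\le cs^{\frac2n-2}H=-\psi'$ and the monotonicity of $u^*-\psi$. But your final step ``integrating and using $\int(u^*)^p=\int\psi^p$'' invokes the $L^p$ normalization $\int_0^{\abs{\Omega}}(u^*)^p=\int_0^{V_0}\psi^p$, which corresponds to a \emph{different} scalar multiple of $\psi$; the two normalizations cannot be imposed simultaneously, and the crossing structure is not invariant under rescaling $\psi$. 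Under your $L^1$ normalization the sign of $\int(u^*)^p-\int\psi^p$ is not determined (note that $G\ge H$ on $[V_0,\abs{\Omega}]$ since $\psi$ vanishes there, so already $\norm{u^*}_{L^1}\ge\norm{\psi}_{L^1}$), and then the estimate $\int(u^*)^q-\int\psi^q\le\frac qp m^{q-p}\big(\int(u^*)^p-\int\psi^p\big)$ yields nothing; under the $L^p$ normalization you no longer know $G(V_0)\le H(V_0)$, so the deduction $G\le H$---and with it the single crossing---breaks down. To repair the proof you must establish the single crossing under the $L^p$ normalization itself, which is what the paper does: it first shows $u^*(0)\le v^*(0)$ via Lemma~\ref{comp-pointwise-chiti} and then tests the Rayleigh quotient of $B_\lambda$ with the symmetrization of the function obtained by gluing $u^*$ and $v^*$ along a putative pair of crossings. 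Alternatively, you would need to treat the case $G(V_0)>H(V_0)$ separately (for instance by showing it forces $u^*\ge\psi$ everywhere, whence equality). As written, the argument does not go through.
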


The paper is organized as follows. In Section \ref{Prelim}, we recall the isoperimetric inequality of complete Riemannian manifold with nonnegative Ricci curvature in \cite{Brendle,AFM},  Schwarz rearrangement and its properties for measurable functions; 
 In Section \ref{pf:Talenti}, we establish Talenti's comparison theorem \ref{th_main_f} by using the isoperimetric inequality and Schwarz rearrangement;
 In Section \ref{pf:FK}, we prove Faber-Krahn inequality for the first eigenvalue of Dirichlet Laplacian and comparison results for $L^1$ and $L^\infty$-moment spectrum.  In the last section \ref{pf:Chiti}, we  establish Chiti's comparison theorem \ref{thm:Chiti} for eigenfunctions of Dirichlet Laplacian.

\section{Preliminaries}\label{Prelim}

\subsection{Isoperimetric inequalities}

 Let $(M,g)$ be a  noncompact, complete $n$-dimensional Riemannian manifold with  nonnegative Ricci curvature and Euclidean volume growth, which means  the asymptotic volume ratio positive, i.e.
 $$
 {\sf AVR}(g)=\lim_{r\to \infty}\frac{\abs{B_x(r)}_g}{\omega_n r^n}>0,
 $$ 
where $B_x(r)$ stands for open metric ball centered at $x\in M$ with radius $r>0$ and $\omega_n$ is denoted by  the volume of the unit ball in $\R^n$.  According to Brendle \cite[Corollary 1.3]{Brendle},  for every bounded domain $\Omega\subset M$ with smooth boundary,   the isoperimetric inequality holds
\begin{equation}\label{isoperi-Brendle}
\abs{\partial \Omega}\geq n\omega_n^\frac{1}{n} \ {\sf AVR}(g)^\frac{1}{n}\, \abs{\Omega}^\frac{n-1}{n}.
\end{equation}
The equality  holds in \eqref{isoperi-Brendle} if 
and only if $(M,g)$ is isometric to $(\mathbb R^n,g_0)$ and $\Omega$ is isometric to an Euclidean ball.   
Letting $\Omega^\sharp\subset \mathbb R^n$ be an Euclidean ball centered at origin satisfying $\AVR\abs{\Omega^\sharp}=\abs{\Omega}$,
the inequality \eqref{isoperi-Brendle} can be equivalently rewritten as 
\begin{equation}\label{isoperi-Brendle-2}
	\abs{\partial \Omega}\geq \AVR\ \abs{\partial \Omega^\sharp}.
\end{equation}
We also notice that the inequality \eqref{isoperi-Brendle}  is proved by Agostiniani et al. \cite[Theorem 1.8]{AFM} for $n=3$ and then extended  to $3\leq n\leq 7$ by Fogagnolo and Mazzieri \cite{FM202012}. The equality case in \eqref{isoperi-Brendle} is also characterized and the isoperimetric inequality still holds in ${\sf CD} (0,N)$  metric measure spaces based on the method of optimal mass transport by Balogh and Krist\'{a}ly in \cite{BK202012} .

\subsection{Schwarz rearrangement}
\begin{defn}
	Let $h: \Omega \to \R$ be a measurable function. The distribution function of $h$ is the function $\mu : [0,+\infty)\, \to [0, +\infty)$ defined by
	$$
	\mu_h(t)= \abs{\left\{x \in \Omega \, :\,  \abs{h(x)} > t\right\}}.
	$$
\end{defn}
Here, and in the whole paper, $\abs{A}$ stands for the $n$-dimensional  measure of the set $A$.

\begin{defn}	Let $\Omega$ be a bounded domain in  complete manifold $(M,g)$  and   $h: \Omega \to \R$ be a measurable function. The decreasing rearrangement $h^{\ast}:[0,\abs{\Omega}_g]\to \mathbb{R}$  is defined by using the distribution function,
	\begin{equation}
	h^{\ast}(s)=
	\begin{cases}
	\underset{\Omega}{\textup{ess sup}}\,h & \textup{if }\,s=0,\\
	\inf \{t:\mu_h(t)\leq s \} & \textup{if }\, t>0.
	\end{cases}
	\end{equation}
	The Schwarz rearrangement $h^\sharp : \Omega^{\ast} \to \mathbb{R}$  of $h$ is  defined by
	\begin{equation}\label{rearr-Sch}
	h^\sharp (x)= h^*(\AVR \abs{B(r)}_{\gz} ),\qquad \text{for}\quad x\in \Omega^{\sharp},
	\end{equation}
where $B(r)$ denotes the Euclidean  ball with radius $r$ centered at origin in $\R^n$.	
\end{defn}
The Schwarz rearrangement $h^\sharp$ and $h$ satisfies that
\begin{equation}\label{Eqn:RearrMean}
\mu_{h}(t)=\AVR\,\mu_{h^{\sharp}}(t).
\end{equation}
 It is easily checked that $h$, $h^*$ and $h^\sharp$ are equi-distributed in the sense that
\begin{equation}\label{Eqn:equi}
\displaystyle{\norm{h}_{L^p(\Omega)}=\norm{h^*}_{L^p(0, \abs{\Omega})}=\AVR^{\frac{1}{p}}\,\norm{h^\sharp}_{L^p(\Omega^\sharp)}}.
\end{equation}
An important property of the decreasing rearrangement is the Hardy-Littlewood inequality
\begin{equation}\label{inq:HL}
\int_{\Omega} \abs{h(x)w(x)} \, dx \le \int_{0}^{\abs{\Omega}} h^*(s) w^*(s) \, ds,
\end{equation}
where $h,w$ are measurable functions defined on $\Omega$.
By  choosing $w=\chi_{\left\lbrace\abs{u}>t\right\rbrace}$ in \eqref{inq:HL}, one has
\begin{equation}\label{inq:HL1}
\int_{\{x\in \Omega:\abs{u}>t\}} \abs{h(x)} \, dx \le \int_{0}^{\mu(t)} h^*(s) \, ds.
\end{equation}
\begin{lem} \cite{HLP}\label{lem:HLP}
	Let  $R, p, q$  be real numbers such that $0<p\leq q$,  $R>0$, and  $f,$ $g$  real
	functions in $L^{q}([0,R])$. If the decreasing rearrangements of $f$
	and $g$ satisfy the following inequality
	$$\int_{0}^{s} \left(f^{\ast}\right)^{p}dt\leq \int_{0}^{s}\left(g^{\ast}\right)^{p}dt, \quad \text{for all } \; s\in [0,R], $$  \\
	then
	$$\int_{0}^{R}f^{q}dt\leq \int_{0}^{R}g^{q}dt.$$
\end{lem}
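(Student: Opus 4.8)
The assertion is the classical Hardy--Littlewood--P\'olya rearrangement lemma, so the plan is simply to record its standard proof: reduce to nonincreasing functions, apply Hardy's rearrangement inequality with a monotone weight built out of $f$, and finish with H\"older's inequality.

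First I would reduce to the monotone case. Since $f^\ast$ is nonnegative, nonincreasing, and equi-distributed with $\abs{f}$ (and likewise $g^\ast$ with $\abs{g}$), one has $\int_0^R\abs{f}^q\,dt=\int_0^R(f^\ast)^q\,dt$ and the same for $g$, so it suffices to prove the statement in the case $f=f^\ast$, $g=g^\ast$ (which leaves the hypothesis unchanged, as $(f^\ast)^\ast=f^\ast$); hence assume from now on that $f,g\ge 0$ are nonincreasing on $[0,R]$. Set $r=q/p\ge 1$. If $r=1$ the conclusion is exactly the hypothesis at $s=R$, so assume $r>1$. Put $\phi=f^p$ and $\psi=g^p$: these are nonnegative and nonincreasing, the hypothesis reads $\int_0^s\phi\,dt\le\int_0^s\psi\,dt$ for every $s\in[0,R]$, and $\phi,\psi\in L^r([0,R])$ since $\int_0^R\phi^r\,dt=\int_0^R f^q\,dt<\infty$ (in particular $\phi\in L^1$, so all the integrals $\int_0^s\phi$ are finite). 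The goal becomes $\int_0^R\phi^r\,dt\le\int_0^R\psi^r\,dt$.

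Next I would prove Hardy's lemma: for every nonnegative nonincreasing weight $w$ on $[0,R]$,
\[
\int_0^R \phi\,w\,dt\le\int_0^R\psi\,w\,dt .
\]
Using the elementary identity $w(t)=\int_0^\infty\chi_{\{w>\lambda\}}(t)\,d\lambda$ and the fact that, since $w$ is nonincreasing, each superlevel set $\{w>\lambda\}$ is (up to a null set) an interval $[0,\ell(\lambda))$ with $\ell(\lambda)\in[0,R]$, Tonelli's theorem (all integrands nonnegative) gives
\[
\int_0^R\phi\,w\,dt=\int_0^\infty\!\Big(\int_0^{\ell(\lambda)}\phi\,dt\Big)d\lambda\le\int_0^\infty\!\Big(\int_0^{\ell(\lambda)}\psi\,dt\Big)d\lambda=\int_0^R\psi\,w\,dt,
\]
where the inequality is the hypothesis applied at $s=\ell(\lambda)$ for each $\lambda$. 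Applying this with the nonincreasing weight $w=\phi^{r-1}$ yields $\int_0^R\phi^r\,dt\le\int_0^R\psi\,\phi^{r-1}\,dt$, and H\"older's inequality with conjugate exponents $r$ and $r/(r-1)$ (note $\phi^{r-1}\in L^{r/(r-1)}$) bounds the right-hand side by $\big(\int_0^R\psi^r\,dt\big)^{1/r}\big(\int_0^R\phi^r\,dt\big)^{(r-1)/r}$. If $\int_0^R\phi^r\,dt=0$ there is nothing to prove; otherwise this quantity is finite and positive, so dividing by $\big(\int_0^R\phi^r\,dt\big)^{(r-1)/r}$ and raising to the power $r$ gives $\int_0^R\phi^r\,dt\le\int_0^R\psi^r\,dt$, which is the desired conclusion after unwinding the substitution.

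Since the lemma is classical, there is no genuine obstacle; the only points needing care are the bookkeeping in the first step — handling the degenerate case $p=q$, passing to $f^\ast,g^\ast$ via equi-distribution, and checking that $\phi,\psi\in L^r$ and $\phi^{r-1}\in L^{r/(r-1)}$ so that H\"older's inequality and the final division are legitimate. An alternative to the H\"older step is to combine the convexity bound $\phi^r-\psi^r\le r\phi^{r-1}(\phi-\psi)$ with a signed version of Hardy's lemma, but the route above is the shortest.
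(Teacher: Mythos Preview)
Your proof is correct and is the standard Hardy--Littlewood--P\'olya argument. Note, however, that the paper does not actually prove this lemma: it is stated with a citation to \cite{HLP} and used as a black box in the proof of Theorem~\ref{thm:Chiti}. So there is no ``paper's own proof'' to compare against; you have supplied what the authors left to the reference. One minor remark: the lemma as stated in the paper writes $\int_0^R f^q\,dt$ for real-valued $f$, which is ill-posed for non-integer $q$ when $f$ takes negative values; your reduction step tacitly (and correctly) interprets this as $\int_0^R\abs{f}^q\,dt$ via the equi-distribution $\int\abs{f}^q=\int(f^\ast)^q$, which is in any case the only way the lemma is invoked later (with $f=u^\ast$, $g=v^\ast$ nonnegative).
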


\section{Proof of Theorem \ref{th_main_f}} \label{pf:Talenti}
In this section, we shall prove the Talenti's comparison theorem \ref{th_main_f} by using Schwarz rearrangement and  the isoperimetric inequality in \cite{Brendle}.

\begin{proof}
	Assuming that  $v$ is the solution of \eqref{Equ:Dirichlet2}, we claim that $v^\sharp=v$. Since $f^\sharp$ is radial function and $\Omega^\sharp$ is the Euclidean ball, $v$ solves the ordinary differential equation
	\begin{equation*}
\begin{cases}
	-\frac{1}{r^{n-1}}\left(r^{n-1}v^\prime\right)^\prime=f^\sharp,\\
	v^\prime(0)=0=v(R),
\end{cases}
	\end{equation*}
where $R$ is the radius of the ball $\Omega^\sharp$. Integrating by parts we have
\begin{equation}\label{v1}
v(r)=\gamma_n^{-2}\int_{\AVR \omega_nr^n}^{\abs{\Omega}}\xi^{-2+\frac2n} F(\xi)d\xi,
\end{equation}
where $\gamma_n=n\left(\AVR \omega_n \right)^\frac{1}{n}$ and  
$$
F(\xi)=\int_0^\xi f^*(\eta)d\eta.$$

From \eqref{v1}, it follows that $ v(r)$ is nonincreasing since $f$ is nonnegative , and so $v=v^{\sharp}$. Therefore, we have
\begin{equation}\label{v2}
v^\ast(s)=\gamma_n^{-2}\int_{s}^{\abs{\Omega}}\xi^{-2+\frac2n} F(\xi)d\xi.
\end{equation}
Let $u$ be the solution to \eqref{Equ:Dirichlet1}.  For $\displaystyle t\ge 0$, we denote by 
\begin{equation*}
\Omega_t=\{x \in \Omega: u(x)>t\}, \qquad \Gamma_t=\{x\in \Omega: u(x)=t\},
\end{equation*}
and by $\mu_u(t) = |\Omega_t|,$ the  volume of $\Omega_t$  in $(M^n,g)$.  The Sard's theorem 
implies that 
\begin{equation*}
\partial\Omega_t=\Gamma_t,
\end{equation*}
for almost every $t$.

A function  $u \in H^1_0(\Omega)$ is a weak solution to \eqref{Equ:Dirichlet1} if
\begin{equation}
	\label{Equ:weaksol}
	\int_{\Omega} \n_g u \cdot \n_g \phi \,d\mu_g = \int_{\Omega} f \phi \,d\mu_g, \qquad \forall\, \phi \in H^1_0(\Omega).
\end{equation}
For $t>0$ and  $h>0$, define 
\begin{equation}\label{fuct:test}
	\varphi_h(x)=\begin{cases}
			0, & \text{if}\quad 0<u<t,\\
				\frac{u-t}{h},  &\text{if}\quad t<u<t+h,\\
		1, &\text{if}\quad t+h<u. 	
	\end{cases}
\end{equation}
Putting the test function \eqref{fuct:test}  in \eqref{Equ:weaksol} and letting $h$ go to $0$ yields
\begin{equation}\label{Equ:weaksol1}
-\frac{d}{dt}\int_{\Omega_t} \abs{\n_g u }^2 \,d\mu_g = \int_{\Omega_t} f \,d\mu_g, 
\end{equation}
From the co-area formula, \eqref{Equ:weaksol1} and \eqref{inq:HL1}, we have
\begin{equation}\label{inq:div}
\begin{aligned}
\int_{\Gamma_t}\abs{\n_g u} d\sigma
=\int_{\Omega_t}fd\mu_g
\leq &\int_0^{\mu_u(t)}f^*(\eta)d\eta.
\end{aligned}
\end{equation} 
From the isoperimetric inequality \eqref{isoperi-Brendle}, the Cauchy-Schwarz inequality and \eqref{inq:div}, we infer
\begin{equation*}
\begin{aligned}
\gamma_n^2 \,\mu_u(t)^{2-\frac{2}{n}}\leq &\abs{\Gamma_t}^2\\
 \leq& \int_{\Gamma_t} \abs{\n_g u}d\sigma \int_{\Gamma_t} \frac{1}{\abs{\n_g u}}d\sigma\\
\leq & \left(-\mu_u^\prime(t)\right)\int_0^{\mu_u(t)}f^*(\eta)ds\\
=&\left(-\mu_u^\prime(t)\right) F(\mu_u(t)),
\end{aligned}
\end{equation*}
i.e.
\begin{equation}\label{inq-u1}
1\leq \gamma_n^{-2}\mu_u(t)^{-2+\frac2n}\left(-\mu_u^\prime(t)\right) F(\mu_u(t))
\end{equation}
Integrating both sides in \eqref{inq-u1} from $0$ to $t$ yields
\begin{equation}\label{ineq-u2}
	t\leq \gamma_n^{-2} \int_{\mu_u(t)}^{\abs{\Omega}}\xi^{-2+\frac2n}F(\xi)d\xi.
\end{equation}
From the definition of $u^\ast$ and the right continuous property for decreasing rearrangement, we can infer
\begin{equation*}
\begin{aligned}
u^\ast(s)\leq &\gamma_n^{-2} \int_{s}^{\abs{\Omega}}\xi^{-2+\frac2n} F(\xi)d\xi\\
=& v^\ast(s),
\end{aligned}
\end{equation*}
which gives the proof of  \eqref{compare:uv}.

For the case of equality in \eqref{compare:uv}, we adapt the technique of Kesavan\cite{Kesavan88}. If $u^\sharp=v$, we have
\begin{equation}\label{equiv:distr}
\mu_u(t)=\AVR\,  \mu_v(t),\qquad \text{for all } \quad t\geq 0.
\end{equation}
From the explicit expression \eqref{v1} for $v$, we get
\begin{equation*}
t=\gamma_n^{-2}\int_{\AVR \mu_v(t)}^{\abs{\Omega}}\xi^{-2+\frac2n} F(\xi)d\xi.
\end{equation*}
Differentiating with respect to $t$, we have
\begin{equation}\label{v3}
1=\gamma_n^{-2}\left(\AVR \mu_v(t)\right)^{-2+\frac2n} F(\AVR \mu_v(t))\left(-\AVR \mu_v^\prime(t)\right).
\end{equation} 
From \eqref{inq-u1}, \eqref{equiv:distr} and \eqref{v3}, we deduce
\begin{equation}\label{}
\begin{aligned}
\gamma_n^2 \,\mu_u(t)^{2-\frac{2}{n}}\leq &\abs{\Gamma_t}^2
=\left(-\mu_u^\prime(t)\right) F(\mu_u(t))
=\gamma_n^2\mu_u(t)^{2-\frac{2}{n}},
\end{aligned}
\end{equation}
which implies that 
\begin{equation*}
\abs{\Gamma_t}= \gamma_n \mu_u(t)^{1-\frac{1}{n}}.
\end{equation*}
  The equality appears in isoperimetric equality \eqref{isoperi-Brendle}   if 
and only if $\AVR=1$, $(M,g)$ is isometric to $(\mathbb R^n,g_0)$ and $\Omega_t$ is isometric to an Euclidean ball centered at origin for almost every $t\geq 0$.   Let $t_n$ denote a strictly decreasing sequence satisfying $t_n\to 0$ as $n \to \infty$. Then
\begin{equation*}
\Omega=\{x\in \R^n: u(x)>0\}=\bigcup_{n=1}^n \Omega_{t_n},
\end{equation*}
which implies that $\Omega$  is a nested union of Euclidean ball centered at origin in $\R^n$, and then we deduce that $\Omega$ is exactly an Euclidean ball $\Omega^\sharp$. This completes the proof of Theorem \ref{th_main_f}.
\end{proof}

\section{ Faber-Krahn inequality, comparisons for $L^1$ and $L^\infty$ moment spectrum} \label{pf:FK}

In this section, as applications of Theorem \ref{th_main_f} we will prove that Faber-Krahn inequality, comparison results for $L^1$ and $L^\infty$-moment spectrum. Following an idea contained in \cite{Kesavan88}, we can prove the  Faber-Krahn inequality for the first eigenvalue.

\begin{proof}[Proof of Corollary \ref{cor:FK} ]
Assume that $v$ solve the equation
\begin{equation}\label{comp:Kesavan1}
\begin{cases}
-\Delta_{g_0} v=\lambda_1(\Omega)\,u^\sharp,\qquad \text{in} \quad\Omega^\sharp,\\
v|_{\partial\Omega^\sharp}=0,
\end{cases}
\end{equation}
where $u^\sharp$ is the Schwarz rearrangement of the first eigenfunction $u$ corresponding to equation \eqref{Equ:DL}. From Talenti's comparison result in Theorem \ref{th_main_f}, we have 
\begin{equation*}
u^\sharp(x)\leq v(x),\qquad x\in \Omega^\sharp,
\end{equation*}
which implies that 
\begin{equation*}
\int_{\Omega^\sharp}vu^\sharp d\mu_{g_0}\leq \int_{\Omega^\sharp} v^2d\mu_{g_0}.
\end{equation*}
Multiplying $v$ both sides in \eqref{comp:Kesavan1} and integrating by parts, we get
\begin{equation*}
\ld_1(\Omega)=\frac{\int_{\Omega^\sharp}\abs{\n_{g_0} v}^2 d\mu_{g_0}}{\int_{\Omega^\sharp}v u^\sharp d\mu_{g_0}}\geq \frac{\int_{\Omega^\sharp}\abs{\n_{g_0} v}^2 d\mu_{g_0}}{\int_{\Omega^\sharp}v^2 d\mu_{g_0}}\geq \ld_1(\Omega^\sharp),
\end{equation*}
where we use the Rayleigh-quotient characterization of the first eigenvalue of $\Omega^\sharp$. This gives the proof of \eqref{ineq:FK}. If the equality holds in \eqref{ineq:FK},  all above inequalities become equalities, especially, $u^\sharp=v$. Hence the equality case follows directly from Theorem \ref{th_main_f}.
\end{proof}

\begin{proof}[Proof of Corollary \ref{cor:HKS}]
Assume that $u_2$ is the second eigenfunction associated to $\ld_2(\Omega)$. According to Courant’s nodal theorem, $u_2$ must change its sign. Defining
$$
\Omega_{+}=\left\{x \in \Omega: u_{2}(x)>0\right\}, \quad \Omega_{-}=\left\{x \in \Omega: u_{2}(x)<0\right\},
$$
we have
$$
\left\{\begin{array} { r l r l } 
	{ - \Delta u _ { 2 } } & { = \lambda _ { 2 } ( \Omega ) u _ { 2 } } & { } & { \text { in } \Omega _ { + } , } \\
	{ u _ { 2 } } & { = 0 } & { } & { \text { on } \partial \Omega _ { + } , }
\end{array} \quad \left\{\begin{array}{rlrl}
	-\Delta u_{2} & =\lambda_{2}(\Omega) u_{2} & & \text { in } \Omega_{-} \\
	u_{2} & =0 & & \text { on } \partial \Omega_{-} .
\end{array}\right.\right.
$$
Let $\mathbb{B}_{+}$ and $\mathbb{B}_{-}$ be two disjoint Euclidean balls with $\AVR\left|\mathbb{B}_{+}\right|=\AVR\left|\mathbb{B}_{-}\right|=|\Omega| / 2$. From the Faber-Krahn inequality \eqref{ineq:FK}, we deduce that
\begin{equation*}
	\ld_2(\Omega)\geq  \ld_1(\Omega{\pm}) \geq  j^2_{\frac{n}{2}-1,1}\left( \frac{\omega_n\, \AVR}{\abs{\Omega_{\pm}}}\right)^{\frac{2}{n}},
\end{equation*}
which implies that
\begin{equation}\label{ineq:HKS1}
	2\ld_2(\Omega)\geq j^2_{\frac{n}{2}-1,1}\left(\omega_n\, \AVR\right)^{\frac{2}{n}}\left(\frac{1}{\abs{\Omega_{+}}^{\frac{2}{n}}}+ \frac{1}{\abs{\Omega_{-}}^{\frac{2}{n}}}\right)\geq j^2_{\frac{n}{2}-1,1}\left(\omega_n\, \AVR\right)^{\frac{2}{n}}\frac{1}{\abs{\Omega}^{\frac{2}{n}}},
\end{equation}
where we use the convexity of $t^{\frac{2}{n}}$ and $\abs{\Omega_{+}}+\abs{\Omega_{-}}\leq \abs{\Omega}$ in the second inequality. If the equality holds in  \eqref{ineq:HKS1}, then $\Omega_{+}$ and $\Omega_{-}$  must be Euclidean balls  by Faber-Krahn inequality \eqref{ineq:FK}. Since $\Omega_{+}$ and $\Omega_{-}$ have the same volume $\frac12 \abs{\Omega}$, $\Omega$ would not be connected. This completes the proof of Corollary \ref{cor:HKS}.
\end{proof}

\begin{proof}[Proof of Corollary \ref{cor:torsion}]
	Let $\Omega^\sharp$ be the rearrangement of $\Omega$ in $\R^n
$.  Assume that  $v_0=1$ and  $v_k$ satisfy
\begin{equation}\label{Equ:LinftyBall}
\left\{\begin{aligned}
-\Delta_{g_0}v_k=&kv_{k-1},\qquad \text{in}\quad \Omega^{\sharp},\\
v_k|_{\partial \Omega^{\sharp}}=&0,\qquad  k=1,\cdots,
\end{aligned}\right.
\end{equation}
and $w_k$ solve
\begin{equation}
\left\{\begin{aligned}
-\Delta_{g_0} w_k=&ku_{k-1}^\sharp,\qquad \text{in}\quad \Omega^{\sharp},\\
v_k|_{\partial \Omega^{\sharp}}=&0,\qquad  k=1,\cdots,
\end{aligned}\right.
\end{equation}
where $u_k$ are the solutions of the equation \eqref{Equ:L-infty-moment spec}.

For $k=1$, from the definition of $T_1(\Omega)$ and  Theorem \ref{th_main_f}, we have 
\begin{equation*}
T_1(\Omega)=\int_{\Omega}u_1d\mu_g=\AVR \int_{\Omega^\sharp} u_1^\sharp d\mu_{g_0}\leq \AVR \int_{\Omega^\sharp} v_1 d\mu_{g_0}=\AVR T(\Omega^\sharp).
\end{equation*}
For $k=2$, since $v_1\geq u_1^\sharp$, the comparison theorem of elliptic equation implies $v_2\geq w_2$.
By Theorem \ref{th_main_f},
 $$w_2\geq u_2^\sharp.$$ 
Therefore, we deduce
\begin{equation*}
\begin{aligned}
T_2(\Omega)=\int_{\Omega}u_2d\mu_g=&\AVR \int_{\Omega^\sharp} u_2^\sharp d\mu_{g_0}\\
\leq& \AVR \int_{\Omega^\sharp} w_2 d\mu_{g_0}\\
\leq& \AVR \int_{\Omega^\sharp} v_2 d\mu_{g_0}\\
=&\AVR T_2(\Omega^\sharp).
\end{aligned}\end{equation*}   
By induction for $k$, we can infer, for $ k\geq 2$,
\begin{equation}\label{comp:uvw}
u_k^\sharp\leq w_k\leq v_k, \qquad \text{in}\quad \Omega^{\sharp}.
\end{equation}  
Finally, we deduce
\begin{equation*}
	T_k(\Omega)=\AVR \int_{\Omega^\sharp} u_k^\sharp d\mu_{g_0}\leq \AVR \int_{\Omega^\sharp} w_k d\mu_{g_0}\leq \AVR \int_{\Omega^\sharp} v_k d\mu_{g_0}=\AVR T_k(\Omega^\sharp),
\end{equation*}    
which give the proof of \eqref{inequ:Saint-Vent}. If the equality occurs in \eqref{inequ:Saint-Vent},  the conclusion follows Theorem \ref{th_main_f}.

For $k=1$, by \eqref{compare:uv} we have
\begin{equation*}
J_1(\Omega)=\sup_{x\in\Omega} u_{1}=\sup_{x\in\Omega^\sharp} u^{\sharp}_{1}\leq \sup_{x\in\Omega^\sharp} v_{1}=J_1(\Omega^\sharp)=\frac{\abs{\Omega}}{2n\AVR \omega_n}.
\end{equation*}
For $k\geq 2$, by \eqref{comp:uvw} we can infer
\begin{equation*}
J_k(\Omega)=\sup_{x\in\Omega} u_{k}=\sup_{x\in\Omega^\sharp} u^{\sharp}_{k}\leq \sup_{x\in\Omega^\sharp} w_k\leq \sup_{x\in\Omega^\sharp} v_{k}=J_k(\Omega^\sharp).
\end{equation*}

In the following, we discuss  the equality case in \eqref{inequ:L-infty}.  If $J_k(\Omega)=J_k(\Omega^\sharp)$ for $k\geq 1$, denote 
$$
\mu_k(t)=\mu_{u_k}(t)=\abs{\{x\in\Omega: u_k(x)>t\}}
$$
 and 
 $$
 \nu_k(t)=\nu_{u_k}(t)=\abs{\{x\in\Omega^\sharp: v_k(x)>t\}},
 $$
  where $u_k$ and $v_k$ are the solution of \eqref{Equ:L-infty-moment spec} and \eqref{Equ:LinftyBall}, respectively. From \eqref{comp:uvw}, $u_k^\sharp\leq v_k$, we have
\begin{equation*}
\mu_k(t)\leq \AVR\nu_k(t).
\end{equation*}
Define 
\begin{equation*}
U_k(\eta)=\int_0^\eta u_{k}^*(s)ds,\qquad V_k(\eta)=\int_0^\eta v_{k}(s)ds,
\end{equation*}
and 
\begin{equation*}
\widetilde{H}_k(\eta)=\gamma_n^{-2}\int_{\eta}^{\abs{\Omega}}\xi^{-2+\frac2n} U_{k-1}(\xi)d\xi,\qquad \widetilde{G}_k(\eta)=\gamma_n^{-2}\int_{\eta}^{\abs{\Omega}}\xi^{-2+\frac2n} V_{k-1}(\xi)d\xi.
\end{equation*}
Similar to \eqref{v1}, 
\begin{equation*}
v_k(r)=\gamma_n^{-2}\int_{\AVR \omega_n r^n}^{\abs{\Omega}}\xi^{-2+\frac2n} V_{k-1}(\xi)d\xi.
\end{equation*}
Letting $v_k(r)=t$,
\begin{equation*}
t=\gamma_n^{-2}\int_{\AVR \nu_k(t)}^{\abs{\Omega}}\xi^{-2+\frac2n} V_{k-1}(\xi)d\xi
\end{equation*}
Taking the derivative with respect to $t$ both sides, we have
\begin{equation*}
1=\gamma_n^{-2}\left(\AVR \nu_k(t)\right)^{-2+\frac2n} V_{k-1}(\AVR \nu_k(t)) \left(-\AVR \nu_k^\prime(t) \right).
\end{equation*}
Now recalling the proof of Theorem \ref{th_main_f}, similar arguments  to \eqref{inq-u1}, we obtain
\begin{equation*}
1\leq \gamma_n^{-2}\left(\mu_k(t)\right)^{-2+\frac2n} U_{k-1}(\mu_k(t)) \left(- \mu_k^\prime(t) \right),
\end{equation*}
which gives 
\begin{equation*}
\frac{d}{dt}\widetilde{G}_k(\AVR \nu_k(t))\leq \frac{d}{dt} \widetilde{H}_k(\mu_k(t)).
\end{equation*}
For $t=0$, 
\begin{equation*}
\begin{aligned}
\widetilde{G}_k(\AVR \nu_k(0))=&\widetilde{G}_k(\AVR \abs{\Omega^\sharp})=\widetilde{G}_k(\abs{\Omega})=0,\\
\widetilde{H}_k(\mu_k(0))=&\widetilde{H}_k(\abs{\Omega})=0,
\end{aligned}
\end{equation*}

If $J_k(\Omega)=J_k(\Omega^\sharp)$, we have
\begin{equation*}
\widetilde{G}_k(0)=\gamma_n^{-2}\int_{0}^{\abs{\Omega}}\xi^{-2+\frac2n} V_{k-1}(\xi)d\xi\geq \gamma_n^{-2}\int_{0}^{\abs{\Omega}}\xi^{-2+\frac2n} U_{k-1}(\xi)d\xi=\widetilde{H}_k(0).
\end{equation*}
Set
$$
\zeta(t)=\widetilde{G}_k(\AVR \nu_k(t))-\widetilde{H}_k(\mu_k(t)).
$$
Since
\begin{equation*}
\zeta(0)=0, \qquad  \zeta(J_k(\Omega))\geq 0,\qquad \zeta^\prime (t)\leq 0,
\end{equation*}
we get
 $$
 \zeta(t)\equiv0,
 $$
  which implies that
 $$\mu_k(t)=\AVR \nu_k(t),\qquad \text{for all}\quad t>0.$$
 This completes the proof of Corollary \ref{cor:torsion}.
\end{proof}

\section{Proof of Chiti's reverse H\"{o}lder inequality}\label{pf:Chiti}
In this section, we will prove Chiti's reverse H\"{o}lder inequality based on the isoperimetric inequality and Faber-Krahn inequality for the first eigenvalue.
\begin{lem}\label{comp-pointwise-chiti}
 Let $(M,g)$ be a  noncompact, complete $n$-dimensional Riemannian manifold with  nonnegative Ricci curvature and $\AVR>0$. Let $\Omega$ be a bounded domain in $(M,g)$ and $u$  be one of  solution to 
 \begin{equation}\label{Equ:DirichletEig1}
 \begin{cases}
 -\Delta_g u=\ld\, u,\qquad \text{in}\quad \Omega,\\
 u|_{\partial \Omega}= 0,\qquad \text{on}\quad \partial \Omega.
 \end{cases}
 \end{equation}
 Let $B_{\ld}$ be an Euclidean ball with the first eigenvalue  $\ld$ and $v$ be the solution to Dirichlet eigenvalue problem
\begin{equation}\label{Equ:DirichletEig2}
 \begin{cases}
-\Delta_{g_0} v=\ld\, v,\qquad \text{in}\quad B_{\ld},\\
v= 0,\qquad \text{on}\quad \partial B_{\ld}.
\end{cases}
\end{equation}
 If $\bar{u}:=\max_{x\in \Omega} u=v(0)$, then we have	
 	\begin{equation}\label{compare:chiti1}
 v^\ast(s)\leq u^\ast(s), \qquad s\in \left[0, \AVR \abs{B_{\ld}}\right].
 \end{equation}
Moreover, the equality  holds in \eqref{compare:chiti1} if and only if $(M,g)$ is isometric to Euclidean space $(\R^n,g_0)$ and $\Omega$ is isoperimetric to Euclidean ball $B_{\ld}$.
\end{lem}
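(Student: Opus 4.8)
The plan is to argue by a rearrangement/comparison scheme in the spirit of Talenti and Chiti, working entirely with the decreasing rearrangement $u^\ast$ on $[0,\abs{\Omega}]$ and the radial profile $v^\ast$ on $[0,\AVR\abs{B_\ld}]$. First I would record the differential inequality satisfied by $u^\ast$. Exactly as in the proof of Theorem \ref{th_main_f}, combining the coarea formula, the weak formulation, the isoperimetric inequality \eqref{isoperi-Brendle} and Cauchy--Schwarz applied to $\int_{\Gamma_t}\abs{\n_g u}\,\int_{\Gamma_t}\abs{\n_g u}^{-1}$, but now with right-hand side $\ld u$ in place of $f$, one obtains for a.e.\ $t$
\begin{equation*}
\gamma_n^2\,\mu_u(t)^{2-\frac2n}\leq \bigl(-\mu_u'(t)\bigr)\,\ld\!\int_0^{\mu_u(t)}u^\ast(\eta)\,d\eta,
\end{equation*}
where $\gamma_n=n(\AVR\,\omega_n)^{1/n}$. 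Rewriting in the $s$-variable via $s=\mu_u(t)$, $u^\ast(s)=t$, this becomes the pointwise inequality
\begin{equation}\label{prop:uast-ode}
-\frac{d}{ds}u^\ast(s)\;\leq\;\gamma_n^{-2}\,s^{-2+\frac2n}\,\ld\!\int_0^s u^\ast(\eta)\,d\eta,\qquad \text{a.e. }s\in(0,\abs{\Omega}].
\end{equation}
On the model side, since $v$ is radial and solves \eqref{Equ:DirichletEig2}, integrating the ODE $-(r^{n-1}v')'=\ld r^{n-1}v$ shows (as in \eqref{v1}--\eqref{v2}) that $v=v^\sharp$ and that $v^\ast$ satisfies \eqref{prop:uast-ode} with equality on $(0,\AVR\abs{B_\ld}]$.

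Next I would set up the comparison. Introduce the primitives $U(s)=\int_0^s u^\ast$, $V(s)=\int_0^s v^\ast$ and the iterated integrals $H(s)=\gamma_n^{-2}\int_s^{\abs{\Omega}}\xi^{-2+\frac2n}\ld\,U(\xi)\,d\xi$ and analogously $G$ built from $V$ on $[0,\AVR\abs{B_\ld}]$, so that $u^\ast(s)\le H(s)$ while $v^\ast(s)=G(s)$. The hypothesis $\bar u=v(0)$ gives $u^\ast(0)=v^\ast(0)$. I claim $v^\ast(s)\le u^\ast(s)$ on $[0,\AVR\abs{B_\ld}]$. Suppose not; let $s_0$ be the infimum of the set where $v^\ast>u^\ast$. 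Then $s_0>0$ (the two profiles are continuous and agree at $0$, and near $0$ one controls the derivatives), $u^\ast(s_0)=v^\ast(s_0)$, and $U(s)\le V(s)$ on $[0,s_0]$ by integration. Feeding $U\le V$ into the integral operators and using the Faber--Krahn inequality \eqref{ineq:FK} to guarantee $\AVR\abs{B_\ld}\le\abs{\Omega}$ (so that the outer integral in $H$ runs over a range containing $[0,\AVR\abs{B_\ld}]$, with a nonnegative leftover tail), a standard Gronwall-type / maximum-principle argument on $[0,s_0]$ shows $u^\ast\ge v^\ast$ there, and moreover forces a strict ordering just past $s_0$ unless everything coincides — contradicting the definition of $s_0$. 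This yields \eqref{compare:chiti1}.

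The main obstacle is the comparison step itself: \eqref{prop:uast-ode} is an integro-differential inequality, not a clean first-order ODE, so the usual ODE comparison theorem does not apply verbatim. The right way around this is to differentiate once more. Writing $\phi=U-V$ on $[0,s_0]$ one gets, after using \eqref{prop:uast-ode} for $u^\ast$ and the corresponding equality for $v^\ast$, a second-order differential inequality of the form $(p(s)\phi')'\le \ld\,q(s)\phi$ with $p(s)=s^{n-1}$-type weight and $\phi(0)=\phi'(0)=0$, $\phi\ge 0$; a Sturm-type argument (or the explicit sign analysis of Chiti, comparing against the Bessel profile which is the first positive solution hence does not change sign on the relevant interval) then shows $\phi\equiv 0$ is impossible unless the two profiles coincide — and pins down that the first crossing cannot occur before $\AVR\abs{B_\ld}$. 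Finally, for the rigidity statement: equality in \eqref{compare:chiti1} forces equality in the isoperimetric inequality \eqref{isoperi-Brendle} for almost every superlevel set $\Omega_t$, hence $\AVR=1$, $(M,g)\cong(\R^n,g_0)$ and each $\Omega_t$ is a centered Euclidean ball; taking a decreasing sequence $t_j\to 0$ and writing $\Omega=\bigcup_j\Omega_{t_j}$ as a nested union of centered balls, exactly as at the end of the proof of Theorem \ref{th_main_f}, shows $\Omega$ is the ball $B_\ld$, completing the proof.
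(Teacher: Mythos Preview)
Your setup (deriving the integro-differential inequality \eqref{prop:uast-ode} for $u^\ast$ and the corresponding equality for $v^\ast$) is correct and matches the paper. However, your comparison step has a genuine gap. First, there is a sign slip: if $s_0$ is the infimum of $\{s:v^\ast(s)>u^\ast(s)\}$, then $u^\ast\ge v^\ast$ on $[0,s_0]$ and hence $U(s)\ge V(s)$ there, not $U\le V$. More seriously, even with the correct sign the ``Gronwall-type'' step does not close: from $U\ge V$ one only gets that the \emph{upper bound} on $-\tfrac{d}{ds}u^\ast$ dominates $-\tfrac{d}{ds}v^\ast$, which is the wrong direction to propagate $u^\ast\ge v^\ast$ past $s_0$. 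The Sturm-type alternative you sketch (passing to $\phi=U-V$ and a second-order inequality) could in principle be made to work, but you stop short of carrying it out; the non-oscillation/sign argument against the Bessel profile is exactly the delicate part, and it is not explained here.

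The paper takes a different and cleaner route that sidesteps the pointwise ODE comparison entirely. It sets
\[
\underline{c}\;=\;\inf\bigl\{c:\ v^\ast(s)\le c\,u^\ast(s)\ \text{for all }s\in[0,\AVR|B_\ld|]\bigr\}
\]
(so $\underline{c}\ge1$ by $u^\ast(0)=v^\ast(0)$), picks a touching point $s_0$ with $\underline{c}\,u^\ast(s_0)=v^\ast(s_0)$, and splices the two profiles into a single nonincreasing function $w(s)=\underline{c}\,u^\ast(s)$ on $[0,s_0]$ and $w(s)=v^\ast(s)$ on $(s_0,\AVR|B_\ld|]$. By construction $w$ still satisfies \eqref{prop:uast-ode}, and a direct computation then shows that its radial rearrangement $w^\sharp$ on $B_\ld$ has Rayleigh quotient at most $\ld$. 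Since $\ld$ is the \emph{first} eigenvalue of $B_\ld$, the quotient must equal $\ld$, and simplicity of the first eigenfunction forces $w^\sharp$ to be a scalar multiple of $v$; comparing values at $s=0$ gives $\underline{c}=1$, which is \eqref{compare:chiti1}. The key idea you are missing is this use of the variational characterization and simplicity of $\ld_1(B_\ld)$, which replaces the delicate pointwise comparison by a single global integral inequality. Your treatment of the equality case is fine and matches the paper.
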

\begin{proof} 
For any $s\in [0, \AVR \abs{B_\ld}]$, there exists a positive constant $c$ such that
\begin{equation*}
    v^\ast(s)\leq c u^*(s).
\end{equation*}	
Define 
\begin{equation*}
\underline{c}=\inf \left \{c|v^\ast(s)\leq c\, u^*(s), s\in \left[0, \AVR \abs{B_\ld}\right]\right\}.
\end{equation*}
Obviously, $\underline{c}\geq 1$ since $\bar{u}=z(0)$. From Faber-Krahn inequality in Corollary \ref{ineq:FK} and the domain monotonicity of the first eigenvalue,  we can infer $B_{\ld}\subseteq \Omega^\sharp$. If $B_{\ld}= \Omega^\sharp$, it is nothing to prove. Now assuming that $B_{\ld}\subset \Omega^\sharp$,  we get
$$
v^*(\abs{B_\ld})=0\quad \text{and} \quad  u^*(\abs{B_\ld})>0.
$$
Since $u^*(0)=\bar{u}=v(0)=v^*(0)$, there exists $s_0\in [0,\AVR\abs{B_\ld}]$ such that 
\begin{equation*}
 \underline{c} u^*(s_0)=v^*(s_0).
\end{equation*}
Setting
\begin{equation*}
w(s)=\begin{cases}
\underline{c} \, u^*(s), \qquad s\in [0,s_0],\\
v^*(s),\qquad s\in (s_1,\abs{B_\ld}],
\end{cases}
\end{equation*}
we have
\begin{equation}\label{compare:w}
-\frac{d}{ds} w(s)\leq \ld\, \gamma_n^{-2}\, s^{-2+\frac2n}\, \int_0^s w(\xi)d\xi.
\end{equation}
Let 
\begin{equation*}
w^\sharp(r)=w\left(\AVR\,\omega_{n} \, r^n\right),\qquad \text{ for}\quad r\in \left[0, j_{\frac n2-1}\ld^{-\frac12}\right].
\end{equation*}
By directly calculations and  \eqref{compare:w}, we have
\begin{equation*}
	\begin{aligned}
	\int_{B_\ld} \abs{\n_{g_0} w^\sharp}^2(r) d\mu_{g_0}=&\AVR^{-1}\int_{0}^{\AVR \abs{B_\ld}}\left(-\frac{d w(s)}{ds}\right)^2\gamma_n^2s^{2-\frac2n}ds\\
	\leq & \AVR^{-1} \ld \int_{0}^{\AVR \abs{B_\ld}}\left(-\frac{d w(s)}{ds}\right)\int_{0}^sw(\xi)d\xi ds\\
	=& \AVR^{-1} \ld \int_0^{\AVR \abs{B_\ld}}w^2(s)\, d\, s,
	\end{aligned}
\end{equation*}
and 
\begin{equation*}
\int_{B_\ld} \left(w^\sharp(r) \right)^2 d\mu_{g_0} =\AVR^{-1} \int_{0}^{\AVR \abs{B_\ld}} w^2(s)\, d\, s.
\end{equation*}
From the Rayleigh quotient on $B_\ld$,  we have
\begin{equation*}
\ld \leq \frac{\int_{B_\ld} \abs{\n_{g_0} w^\sharp}^2(r) d\mu_{g_0}}{\int_{B_\ld} \left(w^\sharp(r) \right)^2 d\mu_{g_0}}\leq \ld.
\end{equation*}
 By using the simplicity of the first eigenvalue and $u^\ast(0)=v(0)$, we get $\underline{c}=1$, which implies
 \begin{equation*}
 	v^\ast(s)\leq u^\ast(s), \qquad\text{for all}\, s\in [0,\AVR \abs{B_\ld}].
 \end{equation*}
 This completes the proof of Lemma \eqref{comp-pointwise-chiti}.
\end{proof}

\begin{proof}[Proof of Theorem \ref{thm:Chiti}]
 We	normalize $u$ such that 
 \begin{equation}\label{equ:norm}
 \norm{u}_{L^p(\Omega)}=\AVR^{\frac{1}{p}}\norm{v}_{L^p(B_\ld)},
 \end{equation}
 which is equivalent to 
 \begin{equation}\label{equ:normalized u}
 \int_{0}^{\abs{\Omega}}\left( u^{\ast}(s)\right)^p\, d\,s= \int_{0}^{\AVR \abs{B_\ld}}\left( v^{\ast}(s)\right)^p\, d\,s.
 \end{equation}
 From Lemma \ref{comp-pointwise-chiti}, we can infer that $v^\ast(0)\geq u^\ast(0)$. 
 
 Now we prove the theorem by dividing two cases.
 
 Suppose that $v^\ast(0)=u^\ast(0)$. Form Lemma \ref{comp-pointwise-chiti} and \eqref{equ:normalized u}, we have
 \begin{equation*}
 \int_{0}^{\abs{\Omega}}\left( u^{\ast}(s)\right)^p\, d\,s= \int_{0}^{\AVR \abs{B_\ld}}\left( v^{\ast}(s)\right)^p\, d\,s\leq  \int_{0}^{\AVR \abs{B_\ld}}\left( u^{\ast}(s)\right)^p\, d\,s.
 \end{equation*}
This  implies that $\abs{\Omega}_g=\AVR \abs{B_\ld}$. Furthermore,  $\Omega^\sharp=B_\ld$ and $u^\sharp=v$, which implies equality cases in \eqref{Chiti0}. 

Now suppose that $v^\ast(0)>u^\ast(0)$. In this case, we get
\begin{equation*}
u^\ast(\abs{\Omega}_g)=0,\qquad v^\ast(\AVR\abs{B_\ld})=0,\qquad \text{and}\quad   \abs{B_\ld}<\abs{\Omega^\sharp}.
\end{equation*}

We claim that there exists only one $s_1\in [0,\AVR \abs{B_\ld}]$ such that $v^\ast(s_1)=u^\ast(s_1)$.  In fact, if one can find $s_2\in (0,\AVR\abs{B_\ld})$ and $s_2>s_1$, such that 
\begin{equation}\label{equ:count}
	u^\ast(s_2)=v^\ast(s_2),\qquad u^\ast(s)>v^\ast(s),\qquad \text{for }\quad s\in (s_1,s_2).
\end{equation}
Defining
\begin{equation*}
h(s)=\begin{cases}
	v^\ast(s),\qquad s\in [0,s_1]\cup \left[s_2,\AVR \abs{B_\ld}\right],\\
	u^\ast(s),\qquad s\in (s_1,s_2),
\end{cases}
\end{equation*} 
 we have
 \begin{equation}\label{comp:interdiff-h}
 	-\frac{d }{ds} h(s)\leq \ld \,\gamma_n^{-2}\, s^{-2+\frac 2n}\,\int_0^s h(\xi)d\xi.
 \end{equation}
 Setting 
 \begin{equation*}
 	h^\sharp(r)=h^*(\AVR \omega_n r^n), \qquad r\in \left[0, j_{\frac n2-1}\ld^{-\frac12}\right],
 \end{equation*}
 by \eqref{equ:count} and \eqref{comp:interdiff-h}, we get 
 \begin{equation*}
 \frac{\int_{B_\ld}\abs{\n_{g_0} h^\sharp}^2 d\mu_{g_0}}{\int_{B_\ld} \left(h^\sharp\right)^2 d\mu_{g_0}}<\ld,
 \end{equation*}
which is a contradiction with the minimum property of the first eigenvalue $\ld$ of $\Omega^\sharp$. Therefore, there exists $s_1$ such that
\begin{equation}\label{comp:uv}
	u^\ast(s) \begin{cases}
		\leq& v^\ast(s), \qquad s\in [0,s_1];\\
		\geq & v^\ast(s), \qquad s\in (s_1,\AVR \abs{B_\ld}].
	\end{cases}
\end{equation}
The function $v^\ast(s)$ can be extended to $0$ as $s\in (\AVR \abs{B_\ld},\abs{\Omega}_g]$. We claim that 
\begin{equation}\label{comp:inter-uv}
\int_0^s\left(u^\ast\right)^p(\xi)d\xi \leq \int_0^s \left(u^\ast\right)^p(\xi) d\xi,\qquad s\in \left[0,\AVR \abs{B_\ld}\right].
\end{equation}
In fact, if $s\in [0,s_1]$,  \eqref{comp:inter-uv} holds obviously by \eqref{comp:uv}. For $s\in (s_1,\AVR \abs{B_\ld}]$,  we deduce, from \eqref{equ:normalized u} and \eqref{comp:uv},
\begin{equation*}
	\begin{aligned}
	\int_0^s\left(u^*\right)^p(\xi)\, d\,\xi =& \int_0^{\abs{\Omega}}\left(u^*\right)^p(\xi)\, d\,\xi-	\int_s^{\abs{\Omega}}\left(u^*\right)^p(\xi)\, d\,\xi\\
	=&\int_0^{\AVR\abs{B_\ld}}\left(v^*\right)^p(\xi)\, d\,\xi-	\int_s^{\abs{\Omega}}\left(u^*\right)^p(\xi)\, d\,\xi\\
	\leq & \int_0^{\AVR\abs{B_\ld}}\left(v^*\right)^p(\xi)\, d\,\xi-	\int_s^{\abs{\Omega}}\left(v^*\right)^p(\xi)\, d\,\xi\\
	=&\int_0^s \left(v^\ast(\xi) \right)^p d\xi.
	\end{aligned}
\end{equation*}
From Lemma \ref{lem:HLP}, we can infer
\begin{equation*}
	\int_{0}^{\abs{\Omega}}\left(u^\ast\right)^q(s)\, d\, s\leq \int_{0}^{\abs{\Omega}}\left(v^\ast\right)^q(s)\, d\, s=\int_0^{\AVR \abs{B_\ld}}\left(v^\ast\right)^q(s)\, d\,s,
\end{equation*}
which is equivalent to 
\begin{equation*}
	\norm{u}_{L^q(\Omega)}\leq \AVR^{\frac{1}{q}}\norm{v}_{L^q(B_\ld)}.
\end{equation*}
Now we assume that the equality holds in \eqref{Chiti0}, from \eqref{equ:norm}, one can infer that 
\begin{equation*}
\abs{\Omega}=\AVR \abs{B_{\lambda}}.
\end{equation*}
Since $B_\lambda$ and $\Omega^{\sharp}$ the ball center at origin, it yields 
\begin{equation*}
\Omega^\sharp=B_\lambda.
\end{equation*}
By assumption of Lemma \ref{comp-pointwise-chiti}, $\lambda$ is the first eigenvalue of $B_\lambda,$ hence of $\Omega^\sharp$. Therefore, we deduce that 
\begin{equation*}
\lambda_1(\Omega)=\lambda_1(\Omega^\sharp)=\lambda.
\end{equation*}
From Corollary \ref{cor:FK}, we infer that $(M,g)$ is isometric to Euclidean space $(\R^n,g_0)$,  $\Omega$ is isometric to Euclidean ball  with radius $j_{\frac n2-1} \ld^{-1/2}$ and $\ld$ is the first eigenvalue of problem \eqref{Equ:DLM}.
This completes the proof of Theorem \ref{thm:Chiti}.
 \end{proof}


 {\it Acknowledgement.}   The authors wish to thank Professor A. Krist\'{a}ly for some valuable discussions and
 helpful comments.

\providecommand{\bysame}{\leavevmode\hbox
	to3em{\hrulefill}\thinspace}

\vspace{1cm}

\begin{flushleft}
	Daguang Chen,
	E-mail: dgchen@tsinghua.edu.cn\\
Haizhong Li,
E-mail:	lihz@tsinghua.edu.cn\\
Department of Mathematical Sciences, Tsinghua University, Beijing, 100084, P.R. China 	
	
\end{flushleft}

\end{document}